\definecolor{darkgreen}{rgb}{0.0, 0.26, 0.15}
\definecolor{darkred}{rgb}{0.65,0.15,0}
\newcommand{\PaperTitle}{Eulerianity of Fourier coefficients of automorphic forms}
\newcommand{\DisplayedTitle}{Eulerianity of Fourier coefficients of \protect\\automorphic forms}
\let\oldtocsubsection=\tocsubsection
\let\oldtocsubsubsection=\tocsubsubsection
\renewcommand{\tocsubsection}[2]{\hspace{1em}\oldtocsubsection{#1}{#2}}
\renewcommand{\tocsubsubsection}[2]{\hspace{2em}\oldtocsubsubsection{#1}{#2}}
\newtheorem{maintheorem}{Theorem}
\newtheorem{maincorollary}[maintheorem]{Corollary}
\theoremstyle{plain}
\newtheorem*{theorem*}{Theorem}
\newtheorem{lemma}{Lemma}[section]
\newtheorem{proposition}[lemma]{Proposition}
\newtheorem{theorem}[lemma]{Theorem}
\newtheorem*{conjecture*}{Conjecture}
\theoremstyle{definition}
\newtheorem{definition}[lemma]{Definition}
\newtheorem*{example*}{Example}
\theoremstyle{remark}
\newtheorem{remark}[lemma]{Remark}
\newtheorem*{remark*}{Remark}
\sloppy \theoremstyle{plain}
\newcommand{\iso}{\cong}
\newcommand{\WO}{\operatorname{WO}}
\newcommand{\bs}{\backslash}
\newcommand{\Hom}{\operatorname{Hom}}
\newcommand{\diag}{\operatorname{diag}}
\newcommand{\A}{\mathbb{A}}
\DeclareMathAlphabet{\mathbbold}{U}{bbold}{m}{n}
\newcommand{\id}{\mathbbold{1}}
\newcommand{\Q}{{\mathbb Q}}
\newcommand{\C}{{\mathbb C}}
\newcommand{\K}{{\mathbb K}}
\newcommand{\E}{{\operatorname{E}}}
\newcommand{\Exp}{\operatorname{Exp}}
\newcommand{\alp}{{\alpha}}
\newcommand{\bfG}{{\mathbf{G}}}
\newcommand{\cA}{{\mathcal{A}}}
\newcommand{\cF}{{\mathcal{F}}}
\newcommand{\abs}[1]{\left|{#1}\right|}
\newcommand{\fg}{{\mathfrak{g}}}
\newcommand{\fn}{{\mathfrak{n}}}
\newcommand{\fu}{{\mathfrak{u}}}
\newcommand{\fv}{{\mathfrak{v}}}
\newcommand{\cO}{{\mathcal{O}}}
\newcommand{\GL}{\operatorname{GL}}
\newcommand{\SL}{\operatorname{SL}}
\newcommand{\SO}{\operatorname{SO}}
\newcommand{\ad}{\operatorname{ad}}
\newcommand{\Ad}{\operatorname{Ad}}
\newcommand{\fq}{\mathfrak{q}}
\newcommand{\fr}{\mathfrak{r}}
\newcommand{\into}{{\hookrightarrow}}
\newcommand{\reals}{\mathbb{R}}
\newcommand{\WS}{\operatorname{WS}}
\renewcommand{\sl}{\mathfrak{sl}}
\newcommand{\lie}[1]{\mathfrak{#1}}
\NewDocumentCommand{\intl}{g}{
    \IfNoValueTF{#1}{\int\limits}{\int\limits_{\mathclap{#1}}}
}
\NewDocumentCommand{\suml}{g}{
    \IfNoValueTF{#1}{\sum\limits}{\sum\limits_{\mathclap{#1}}}
}
\newcommand{\Oh}{\mathcal{O}}
\newcommand{\op}{\operatorname}
\numberwithin{equation}{section}
\begin{document}

\author[D. Gourevitch]{Dmitry Gourevitch}
\address{Dmitry Gourevitch,
Faculty of Mathematics and Computer Science,
Weizmann Institute of Science,
POB 26, Rehovot 76100, Israel }
\email{dmitry.gourevitch@weizmann.ac.il}
\urladdr{\url{http://www.wisdom.weizmann.ac.il/~dimagur}}

\author[H. Gustafsson]{Henrik P. A. Gustafsson}
\address{Henrik Gustafsson, \newline School of Mathematics, Institute for Advanced Study, Princeton, NJ~08540 \newline
Department of Mathematics, Rutgers University, Piscataway, NJ~08854 \newline Department of Mathematical Sciences, University of Gothenburg and Chalmers University of Technology, SE-412~96 Gothenburg, Sweden}
\email{gustafsson@ias.edu}
\urladdr{\url{http://hgustafsson.se}}

\author[A. Kleinschmidt]{Axel Kleinschmidt}
\address{Axel Kleinschmidt,  \newline
Max-Planck-Institut f\"{u}r Gravitationsphysik (Albert-Einstein-Institut),
Am M\"{u}hlenberg 1, DE-14476 Potsdam, Germany \newline
 International Solvay Institutes,
ULB-Campus Plaine CP231, BE-1050, Brussels, Belgium}
\email{axel.kleinschmidt@aei.mpg.de}

\author[D. Persson]{Daniel Persson}
\address{Daniel Persson, Chalmers University of Technology, Department of Mathematical Sciences\\
SE-412\,96 Gothenburg, Sweden}
\email{daniel.persson@chalmers.se}

\author[S. Sahi]{Siddhartha Sahi}
\address{Siddhartha Sahi, Department of Mathematics, Rutgers University, Hill Center -
Busch Campus, 110 Frelinghuysen Road Piscataway, NJ 08854-8019, USA}
\email{sahi@math.rugers.edu}

\subjclass[2010]{11F30, 11F70, 22E55, 20G45}

\keywords{Euler product, Fourier coefficients on reductive groups, Fourier--Jacobi coefficients, automorphic forms, automorphic representation, minimal representation, next-to-minimal representation, Whittaker support, nilpotent orbit, wave-front set, Eisenstein series}

\title[\PaperTitle]{\DisplayedTitle}
\maketitle
\begin{abstract}
We study the question of Eulerianity (factorizability) for Fourier coefficients of automorphic forms, and we prove a general transfer theorem that allows one to deduce the Eulerianity of certain coefficients from that of another coefficient. We also establish a `hidden' invariance property of Fourier coefficients. We apply these results to minimal and next-to-minimal automorphic representations, and deduce Eulerianity for a large class of Fourier and Fourier--Jacobi coefficients.
In particular, we prove Eulerianity for parabolic Fourier coefficients with characters of maximal rank for a class of Eisenstein series in minimal and next-to-minimal representations of groups of ADE-type that are of interest in string theory.

\end{abstract}

\setcounter{tocdepth}{2}
\tableofcontents

\newpage

\section{Introduction}
\label{sec:intro}

In the classical theory of modular forms for $\SL_2$ the study of Fourier coefficients is instrumental for revealing arithmetic information. For example, Fourier coefficients of Eisenstein series carry information on the analytic continuation of zeta functions, while Fourier coefficients of theta functions yield important results for Euclidean lattices (see, e.g., \cite{Serre}). For automorphic forms on groups of higher rank the study of Fourier coefficients is crucial when proving the meromorphic continuation of Langlands $L$-functions as well as analysing the trace formula, both of which are cornerstones in the Langlands program (see, e.g., \cite{MR0419366,MR498494,MR610479,JiangRallis,GinzburgTowards}).

A key step in proving continuation of $L$-functions is the unfolding of global (adelic) integrals, which in particular involves Fourier coefficients of Eisenstein series. It is then important to consider special types of Fourier coefficients which are \emph{Eulerian}, i.e.\ can be factorized into an Euler product over all places. The prototypes of such coefficients are the so-called \emph{Whittaker--Fourier coefficients}, or Whittaker coefficients for short.
For now, let $\mathbf{G}$ be a reductive algebraic group defined and quasi-split over a number field $\mathbb{K}$.
Let $\mathbb{A} = \mathbb{A}_\mathbb{K}$ be the adele ring of $\mathbb{K}$, $G = \mathbf{G}(\mathbb{A})$ and $\Gamma = \mathbf{G}(\mathbb{K})$.
For some of our results we will later also consider non-split groups and finite central extensions of $\mathbf{G}(\mathbb{A})$.
Choose a Borel subgroup $\mathbf{B}=\mathbf{N}\mathbf{A} \subset \mathbf{G}$, with maximal torus $\mathbf{A}$ and unipotent radical $\mathbf{N}$. Let $\psi_N$ be a unitary character on $N = \mathbf{N}(\mathbb{A})$, trivial on $N \cap \Gamma = \mathbf{N}(\mathbb{K})$. Furthermore, let $\eta$ be an automorphic form in an admissible automorphic representation $\pi$ of $G$. To this data one can associate the Whittaker coefficient:
\begin{equation}
  \mathcal{W}_{\psi_N}[\eta](g) =\int_{\mathbf{N}(\mathbb{K})\backslash \mathbf{N}(\mathbb{A})} \eta(ng)\psi_N(n)^{-1} dn \qquad g \in G.
\label{WhittakerIntegral}
\end{equation}

Choose a pinning of $\mathbf{G}$, i.e., for every simple root $\alpha$ a non-zero root vector $X_\alpha\in \fg_{\alpha}$, and let $x_\alpha : \mathbb{G}_a \to \mathbf{G}$ be the corresponding one-parameter subgroup of $\mathbf{G}$.
When $\psi_N$ is generic, that is, $\psi_{N}(x_\alpha(t))$ is non-trivial for all simple roots $\alpha$, it is well-known that $\mathcal{W}_{\psi_{N}}[\eta](g)$ is Eulerian, that is, for a pure tensor $\eta$ in an irreducible admissible representation $\pi = \bigotimes_\nu \pi_\nu$, it factorizes over the places $\nu$ of $\mathbb{K}$ as
\begin{equation}
  \mathcal{W}_{\psi_N}[\eta](g) = \prod_{\nu} \mathcal{W}_{\psi_N,\nu}[\eta](g_\nu), \qquad g = \prod_\nu g_\nu, \qquad g_\nu \in \mathbf{G}(\mathbb{K}_\nu),
\end{equation}
where $\mathcal{W}_{\psi_N,\nu}[\eta]$ are local Whittaker functions. This follows from the fact that the corresponding local Whittaker models induced by $\psi_N$ are at most one-dimensional \cite{GK:1975, Shalika, Rod, Kostant}.

More generally, to any unipotent subgroup $\mathbf{U}\subset \mathbf{G}$ and any unitary character $\psi_U$ on $U = \mathbf{U}(\mathbb{A})$ trivial on $\mathbf{U}(\mathbb{K})$ one can associate the Fourier coefficient
(or `unipotent period integral')
\begin{equation}
\label{eq:Ucoeff}
\mathcal{F}_{\psi_U}[\eta](g) =\int_{\mathbf{U}(\mathbb{K})\backslash \mathbf{U}(\mathbb{A})} \eta(ug)\psi_U(u)^{-1}du.
\end{equation}
In particular, when $\mathbf{U}$ is the unipotent radical of a standard parabolic subgroup $\mathbf{P}$ of $\mathbf{G}$, we will call $\mathcal{F}_{\psi_U}$ a \emph{parabolic Fourier coefficient}.
The above Whittaker coefficients correspond to the special case when $\mathbf{P}$ equals the minimal parabolic $\mathbf{B}$.

It is a difficult question to determine, in general, for which choice of data $(\eta, \mathbf{U}, \psi_U)$ the Fourier coefficient $\mathcal{F}_{\psi_U}[\eta]$ is Eulerian.
Generally, one does \emph{not} expect it to factorize.
However, when $\eta$ belongs to a class of \emph{minimal automorphic representations} $\pi_{\text{min}}$, obtained as special values (see Table~\ref{tab:SmallReps}) of spherical Eisenstein series on groups of ADE-type, we recently showed that $\mathcal{F}_{\psi_U}[\eta]$ is Eulerian if $U$ is the unipotent radical of a maximal parabolic subgroup of $G$, and $\psi_U$ is a non-trivial character on $U$ \cite{LeviDist, NTMSimplyLaced}.\footnote{Previous related work includes \cite{GRS2,GanSavin,MillerSahi,GMV,GKP,FGKP}.}

In this paper we prove Eulerianity for a large class of Fourier coefficients. We will use a variety of techniques, building on our previous works \cite{LeviDist,NTMSimplyLaced} as well as on \cite{GGS} and \cite{FKP,FGKP}. To explain our results we first introduce some terminology which will be detailed further in \S\ref{sec:prel}.

Let $\mathfrak{g} = \mathfrak{g}_\mathbb{K}$ be the Lie algebra of $\Gamma = \mathbf{G}(\mathbb{K})$ and $\mathfrak{g}^*$ its vector space dual. A \emph{Whittaker pair} is an ordered pair $(S, \varphi) \in \mathfrak{g} \times \mathfrak{g}^*$ where $S$ is semi-simple and rational, i.e.\ $\ad(S)$ has eigenvalues in $\mathbb{Q}$,  and $\text{ad}^{*}(S)\varphi=-2\varphi$.
Fix an additive, unitary character $\chi$ on $\mathbb{A}$ trivial on $\mathbb{K}$. A Whittaker pair $(S, \varphi)$ determines a unipotent subgroup $N_{S, \varphi} \subset \mathbf{G}(\mathbb{A})$ defined in~\eqref{eq:N-S-phi} along with a unitary character $\chi_\varphi$ on $N_{S, \varphi}$, trivial on $N_{S, \varphi} \cap \Gamma$, defined by $\chi_\varphi(n)\coloneqq \chi(\varphi(\log n))$.
In this way we can associate to any automorphic form $\eta$ and Whittaker pair $(S, \varphi)$ a Fourier coefficient
\begin{equation}
  \mathcal{F}_{S,\varphi}[\eta](g)=\int_{(N_{S, \varphi} \cap \Gamma)\backslash N_{S, \varphi}} \eta(ng)\chi_\varphi(n)^{-1}dn.
\end{equation}
Note that this class of Fourier coefficients includes, in particular, all \emph{parabolic Fourier coefficients} as well as the coefficients studied in~\cite{GRS2,GRS,Ginz,GH,JLS} which we call \emph{neutral Fourier coefficients} and will define below.

The vanishing properties of these neutral Fourier coefficients determine the \emph{global wave-front set} $\WO(\eta)$ consisting of $\Gamma$-orbits of nilpotent elements $\varphi \in \mathfrak{g}^*$.
Using the partial ordering of these orbits, the global wave-front set is used to define minimal and next-to-minimal representations in \S\ref{sec:prel}.
The set of maximal orbits in $\WO(\eta)$ form the \emph{Whittaker support} $\WS(\eta)$ and we say that Fourier coefficients $\mathcal{F}_{S,\varphi}[\eta]$ with $\Gamma \varphi \in \WS(\eta)$ are of \emph{maximal rank}.

To present our results in full generality, we will need a more general notion of Fourier coefficients which, besides $N_{S,\varphi}$, contain a further period integral depending on a choice of an isotropic subspace of $\mathfrak{g}$.
Let us briefly describe it now, and refer to Definition~\ref{def:extended-Whittaker-Fourier-coefficient} below for further details.

For a Whittaker pair $(S,\varphi)$ let $\fu_S$ denote the nilpotent subalgebra of $\fg$ that equals the sum of all eigenspaces of $\ad(S)$ in $\fg$ corresponding to eigenvalues at least 1.
Then $\varphi$ defines an antisymmetric form $\omega_{\varphi}$ on $\fu_S$ by $\omega_{\varphi}(X,Y)\coloneqq \varphi([X,Y])$.
The Lie subalgebra $\fn_{S,\varphi}$ of $N_{S,\varphi} \cap \Gamma$ mentioned above is the radical of this form.
Choose any isotropic subspace $\mathfrak{i}$ of $\mathfrak{u}_S$ with respect to $\omega_\varphi$ that contains $\mathfrak{n}_{S,\varphi}$ and let $I\coloneqq \Exp(\mathfrak{i}\otimes_\mathbb{K} \mathbb{A})$ be the corresponding subgroup of $G = \mathbf{G}(\mathbb{A})$. For any automorphic form $\eta$, we define the \emph{isotropic Fourier coefficient} $\mathcal{F}^I_{S,\varphi}[\eta]$ by
\begin{equation}
  \mathcal{F}^I_{S,\varphi}[\eta](g)=\int_{(I \cap \Gamma)\backslash I} \eta(ng)\chi_\varphi(n)^{-1}dn,
\end{equation}
where $\chi_\varphi$ can be extended to a character on $I$ as explained in \S\ref{sec:prel}.
We are in particular interested in maximal isotropic subspaces of $\mathfrak{u}_S$ with respect to inclusion.
The corresponding maximal isotropic Fourier coefficients will be called \emph{Fourier--Jacobi coefficients} for short and are a generalization of the similarly named coefficients in~\cite[\S 5.2.3]{HundleySayag} (c.f.\ \cite{Ikeda,GRS:manuscripta}).
As further discussed in Remark~\ref{rem:maximal-isotropic}, maximal isotropic subspaces are in bijection with Lagrangian subspaces of the abelian quotient $\fu_S/\fn_{S,\varphi}$.

If $\ad(S)$ does not have eigenvalue 1 then $\fu_S=\fn_{S,\varphi}$ and thus $\mathcal{F}_{S,\varphi}[\eta]=\mathcal{F}^I_{S,\varphi}[\eta]$ meaning that $\mathcal{F}_{S,\varphi}$ is a Fourier--Jacobi coefficient.
This is, for example, the case for any parabolic Fourier coefficient.
In general, $\mathcal{F}_{S,\varphi}[\eta]$ equals a sum of left-translates of~$\mathcal{F}^I_{S,\varphi}[\eta]$ as seen in \cite[Lemma~3.1.1]{LeviDist}.

The main results of this paper are contained in the following statements.

\begin{maintheorem}[Transfer of Eulerianity, \S\ref{subsec:Trans}]
  \label{thm:transfer}
Let $\eta$ be an automorphic form on $G$.
Let $(S,\varphi)$ and $(H,\psi)$ be two Whittaker pairs such that $\Gamma \psi= \Gamma \varphi\in \WS(\eta)$.
Suppose that a Fourier--Jacobi coefficient $\cF^I_{S,\varphi}[\eta]$ is Eulerian. Then any Fourier--Jacobi coefficient $\cF^{I'}_{H,\psi}[\eta]$ is also Eulerian.
\end{maintheorem}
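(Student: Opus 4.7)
The plan is to transfer Eulerianity along a chain of ``elementary moves'' between Whittaker data, each of which effectively reduces to a scalar multiple of the coefficient at the previous step, thanks to a hidden-invariance property of maximal-rank Fourier coefficients proved elsewhere in the paper.

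First I would reduce to the case $\psi=\varphi$. Since $\Gamma\psi=\Gamma\varphi$, pick $\gamma\in\Gamma$ with $\psi=\Ad^*(\gamma)\varphi$ and conjugate the second Whittaker pair: $(H,\psi,I')$ becomes $(\Ad(\gamma^{-1})H,\varphi,\gamma^{-1}I'\gamma)$, and the corresponding Fourier--Jacobi coefficients differ only by a left translation by $\gamma$. Because $\gamma\in\mathbf{G}(\K)$ embeds diagonally into $\prod_\nu \mathbf{G}(\K_\nu)$, such a left translation preserves Eulerianity, so we may assume $\psi=\varphi$ throughout.

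Next I would route the comparison through a canonical reference pair, such as the neutral Whittaker pair $(h,\varphi)$ attached to a Jacobson--Morozov $\sl_2$-triple containing $\varphi$ (with a fixed maximal isotropic choice if needed). Using the root-exchange machinery of \cite{GGS} together with the paper's own preceding lemmas, the passage from $(S,\varphi,I)$ to the reference data is expressed as a finite composition of identities of the form $\cF^{I_2}_{S_2,\varphi}[\eta](g)=\int_V \cF^{I_1}_{S_1,\varphi}[\eta](vg)\,dv$ for suitable unipotent subgroups $V\subset\mathbf{G}(\A)$, and similarly for $(H,\varphi,I')$. A change of maximal isotropic subspace within a fixed Whittaker pair is handled by the analogous comparison for two Lagrangians of the Heisenberg-type quotient $\fu_S/\fn_{S,\varphi}$.

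The decisive step is collapsing each such integral. Here I would invoke the hidden-invariance theorem of an earlier section: since $\Gamma\varphi\in\WS(\eta)$, the intermediate coefficient $\cF^{I_1}_{S_1,\varphi}[\eta]$ is in fact left-invariant under the full group $V$ over which the move integrates, so the integral reduces to a volume factor times $\cF^{I_1}_{S_1,\varphi}[\eta](g)$. Neither a scalar multiple nor a rational translation can destroy factorization over the places of $\K$, so Eulerianity is transferred at each step, and chaining the moves yields the theorem. The main obstacle I anticipate is combinatorial: arranging the chain of intermediate Whittaker pairs so that each $V$ really lies inside the group under which hidden invariance forces left-invariance. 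This requires careful weight-space bookkeeping under $\ad(S)$, $\ad(h)$, and $\ad(H)$, and is precisely where the maximal-rank hypothesis $\Gamma\varphi\in\WS(\eta)$ plays its essential role.
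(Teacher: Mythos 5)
Your overall skeleton is right, and matches the paper's: conjugate by $\gamma\in\Gamma$ to equate the neutral Whittaker pairs attached to the two sides, and relate each given Fourier--Jacobi coefficient to its neutral-pair counterpart via the integral transforms of Theorem~\ref{thm:IntTrans}~\ref{itm:Int}. Where you go wrong is the ``decisive step.''

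You claim that, because $\Gamma\varphi\in\WS(\eta)$, the hidden-invariance theorem makes the intermediate coefficient $\cF^{I_1}_{S_1,\varphi}[\eta]$ left-invariant under the group $V$ of the transform, so that $\int_V\cF^{I_1}_{S_1,\varphi}[\eta](vg)\,dv$ collapses to a volume factor times $\cF^{I_1}_{S_1,\varphi}[\eta](g)$. This cannot work. First, the group $V=\Exp(\fv\otimes_\K\A)$ in Theorem~\ref{thm:IntTrans}~\ref{itm:Int} is \emph{not} contained in the unipotent part of the centralizer of the Whittaker pair: $\fv$ sits inside an isotropic subspace of $\fu_H$ or $\fu_S$, on which $\varphi$ is generically nonzero, whereas Theorem~\ref{thm:Triv} only gives invariance under unipotent elements that centralize $(H,\varphi)$. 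Second, $V$ is a non-compact adelic group, so if the integrand really were $V$-invariant the integral would be $\vol(V)\cdot\cF^{I_1}_{S_1,\varphi}[\eta](g)=\infty$, not a finite scalar multiple. So the integral does not collapse.

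The actual mechanism, and the one the paper uses, is simpler and does not invoke hidden invariance at all: $V$ is a restricted product $V=\prod_\nu V_\nu$ of the local groups $\Exp(\fv\otimes_\K\K_\nu)$, with a factorizing Haar measure. Hence if $\cF^{I'}_{S,\varphi}[\eta]=\prod_\nu f_\nu$ is Eulerian, then $\int_V\cF^{I'}_{S,\varphi}[\eta](vg)\,dv=\prod_\nu\int_{V_\nu}f_\nu(v_\nu g_\nu)\,dv_\nu$ is again a product of local functions, so the transform preserves Eulerianity place-by-place. (Similarly for the transform in the other direction.) Left translation by $\gamma\in\Gamma\hookrightarrow\prod_\nu\mathbf{G}(\K_\nu)$ preserves Eulerianity, as you correctly note. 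Theorem~\ref{thm:Triv} is not used in the proof of Theorem~\ref{thm:transfer}; it comes into play only later, in establishing Corollary~\ref{cor:EuAD} via uniqueness of Bessel models. Your ``combinatorial obstacle'' is therefore a red herring: no bookkeeping is needed to force invariance under $V$, because no such invariance is needed or true.
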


In particular, we may use this to transfer Eulerianity to Fourier coefficients $\mathcal{F}_{S,\varphi}$ for which $\ad(S)$ has no eigenvalue $1$, such as to parabolic Fourier coefficients.
Recall that any such Fourier coefficient is a Fourier--Jacobi coefficient.

\begin{maintheorem}[Hidden symmetry, \S\ref{sec:hidden}]
  \label{thm:Triv}
  Let $\eta$ be an automorphic form on $G$, and let $(H,\varphi)$ be a Whittaker pair with $\Gamma\varphi\in\WS(\eta)$.
  Then any unipotent element of the centralizer of the pair $(H,\varphi)$ in $G$ acts trivially on the Fourier coefficient $\cF_{H,\varphi}[\eta]$ using the left regular action.
\end{maintheorem}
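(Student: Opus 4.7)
The plan is to prove invariance under one-parameter unipotent subgroups. Every unipotent element of $\mathbf{C}(H,\varphi)(\A)$ is generated by elements of the form $\exp(tX)$, where $X$ ranges over nilpotent elements of the $\K$-defined Lie algebra $\fc(H,\varphi) = \{X\in\fg : [X,H]=0,\ \ad^*(X)\varphi=0\}$ and $t\in\A$. It therefore suffices to show that
\[
  f(t) \;\coloneqq\; \cF_{H,\varphi}[\eta]\bigl(\exp(tX)\,h\bigr)
\]
is independent of $t\in\A$ for every such $X$. The function $f$ is $\K$-periodic: for $s\in\K$, $\exp(sX)\in\Gamma$ centralizes $(H,\varphi)$, hence normalizes $N_{H,\varphi}$ and preserves $\chi_\varphi$, so conjugating $n$ by $\exp(sX)$ inside the integral defining $\cF_{H,\varphi}[\eta]$ and using the left $\Gamma$-invariance of $\eta$ yields $f(s+t)=f(t)$. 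Fourier-expanding on $\A/\K$ as $f(t)=\suml{\lambda\in\K}c_\lambda(h)\chi(\lambda t)$, the theorem reduces to showing $c_\lambda(h)=0$ for every $\lambda\neq 0$.

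Unwinding and swapping the order of integration (using that $\exp(tX)$ preserves $\chi_\varphi$ under conjugation),
\[
  c_\lambda(h) \;=\; \intl{(N_{H,\varphi}\cap\Gamma)\bs N_{H,\varphi}}\intl{\A/\K}\eta\bigl(\exp(tX)\,n\,h\bigr)\chi(-\lambda t)\,dt\,\chi_\varphi(n)^{-1}\,dn
\]
is a Fourier integral of $\eta$ along the enlarged unipotent subgroup $N_{H,\varphi}\cdot\exp(\A X)$ against the character extending $\chi_\varphi$ on $N_{H,\varphi}$ by $\exp(tX)\mapsto\chi(\lambda t)$ on $\exp(\A X)$. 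Under the Killing identification $\fg\simeq\fg^*$, this character corresponds to the linear functional $\varphi+\lambda X^{\vee}\in\fg^*$, equivalently to the nilpotent $F+\lambda X\in\fg$, where $F$ is the nilpotent representative of $\varphi$. I would then appeal to the generalized Fourier coefficient formalism of~\cite{GGS} and of \S\ref{sec:prel}, in which the vanishing of such an integral is controlled by whether the $\Gamma$-orbit of the corresponding functional lies in $\WO(\eta)$.

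The final step is the orbit comparison. The hypotheses $[X,H]=0$ and $\ad^*(X)\varphi=0$ translate, via the invariance of the Killing form, into $[X,H]=[X,F]=0$; after reducing to a neutral Whittaker pair with $\mathfrak{sl}_2$-triple $(E,H,F)$, a short $\mathfrak{sl}_2$-representation argument forces $[X,E]=0$ as well, placing $X\in\ker(\ad E)$, the Slodowy slice direction transverse to the orbit of $F$. Slodowy transversality then gives $\Gamma(F+\lambda X)>\Gamma F$ strictly in the closure order whenever $\lambda\neq 0$ and $X\neq 0$: the limit $F=\lim_{\lambda\to 0}(F+\lambda X)$ gives $F\in\overline{\Gamma(F+\lambda X)}$, while the fact that the slice meets $\Gamma F$ only at $F$ rules out equality of orbits. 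Since $\Gamma\varphi\in\WS(\eta)$ is maximal in $\WO(\eta)$, the strict inequality forces $\Gamma(F+\lambda X)\notin\WO(\eta)$, and the vanishing of Fourier coefficients attached to orbits outside $\WO(\eta)$ yields $c_\lambda(h)=0$. Substituting back gives $f(t)=c_0(h)=\cF_{H,\varphi}[\eta](h)$, completing the argument. The main obstacle is the middle step: interpreting $c_\lambda$ as a Fourier coefficient controlled by $\WO(\eta)$, and cleanly reducing a general Whittaker pair to a neutral one via the equivalences of Fourier coefficients attached to different Whittaker pairs sharing the same nilpotent.
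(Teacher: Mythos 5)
Your Fourier-expansion strategy is appealing and genuinely different from the paper's, but as written it has two real gaps, one of which you flag yourself.

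First, the reduction to a neutral pair is not as clean as you suggest. Write $H = h + Z$ with $(h,\varphi)$ neutral and $Z$ commuting with the $\mathfrak{sl}_2$-triple (Lemma~\ref{lem:neutral-dominates}). Your hypothesis gives $[X,H]=0$ and $[X,F]=0$, but \emph{not} $[X,h]=0$: only $[X,h]=-[X,Z]$, which need not vanish. So $X$ is in general not an $\ad h$-weight-$0$ vector, the $\mathfrak{sl}_2$-representation argument forcing $[X,E]=0$ does not apply, and $F+\lambda X$ need not lie on the Slodowy slice $F+\fg_E$. The paper's proof confronts exactly this issue: it decomposes the unipotent part of the centralizer $\fq = \fg^H_0\cap\fg_\varphi$ into $\ad Z$-eigenspaces $\fq^Z_0 \oplus \fq^Z_{>0}$ and treats the two pieces by different mechanisms. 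Only on $\fq^Z_0$ (which does commute with the neutral $h$) does your picture of completing $X$ to an $\mathfrak{sl}_2$-triple inside the centralizer survive; for $\fq^Z_{>0}$ the paper instead absorbs $u$ into $N_{H+\lambda^{-1}Z,\varphi}$ and uses domination.

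Second, and more seriously, the step you call ``the main obstacle'' really is a gap: the vanishing criterion of the GGS/\S\ref{sec:prel} machinery is stated for coefficients $\cF_{S,\varphi'}$ attached to Whittaker pairs (and their isotropic refinements), not for an arbitrary unipotent period such as integration over $N_{H,\varphi}\cdot\exp(\A X)$ against the character corresponding to $\varphi + \lambda X^\vee$. This functional does not lie in a single $\ad H$-eigenspace, so $(H,\varphi+\lambda X^\vee)$ is not a Whittaker pair, and it is not obvious that there exists some $S'$ with $\fn_{S',\varphi+\lambda X^\vee}$ equal to your enlarged Lie algebra. Without exhibiting such an $S'$, or otherwise dominating $c_\lambda$ by a genuine $\cF_{S',\varphi'}$ with $\Gamma\varphi'\notin\WO(\eta)$, the appeal to maximality of $\Gamma\varphi$ in $\WO(\eta)$ does not close. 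The paper sidesteps this entirely: rather than changing the nilpotent $\varphi$, it keeps $\varphi$ fixed and only changes the semisimple part $S$, placing $u$ inside $N_{S',\varphi}$ for a pair $(S',\varphi)$ that is still of the admissible form and is dominated by $(H,\varphi)$; then Theorem~\ref{thm:IntTrans}~\ref{itm:LinDet} (equality of kernels) applies directly. I recommend reworking your outline along those lines, or else proving a dedicated lemma that an extra Fourier integration in a direction of $\fg^H_0\cap\fg_\varphi$ against a nontrivial character is annihilated on any representation with Whittaker support $\Gamma\varphi$; this is essentially what the paper's argument establishes, but it is not a citation-level fact you can assume.
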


From Theorem~\ref{thm:transfer} and the local uniqueness results in \cite{LokeSavin2006,KobayashiSavin}  we obtain the following corollary.

\begin{maincorollary}[\S\ref{sec:min-rep}]
  \label{cor:min-rep}
Let $\bfG$  be split of type $D_n$ or $E_7$,
and let $\pi$ be an irreducible  unitary minimal subrepresentation
of $\cA(G)$.
Then any Fourier--Jacobi coefficient $\cF_{H,\varphi}^I$ with $\varphi\neq 0$ is Eulerian on $\pi$.
\end{maincorollary}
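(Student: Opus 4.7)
Since $\pi$ is a minimal representation, the global wave-front set satisfies $\WO(\pi)\subseteq\{0\}\cup\cO_{\min}$, where $\cO_{\min}$ is the minimal nilpotent orbit. Thus if $\cF^I_{H,\varphi}$ is nonzero on $\pi$ and $\varphi\neq 0$, the $\Gamma$-orbit of $\varphi$ lies in the minimal Whittaker support, hence inside a rational form of $\cO_{\min}$. (If the coefficient vanishes on $\pi$ there is nothing to prove, as a vanishing coefficient is trivially Eulerian.) So it suffices to establish Eulerianity of $\cF^I_{H,\varphi}$ for any Whittaker pair $(H,\varphi)$ with $\Gamma\varphi\in\WS(\pi)\cap\cO_{\min}$ and any maximal isotropic $I$.

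By Theorem~\ref{thm:transfer}, if we can exhibit for each such $\Gamma$-orbit $\cO$ a \emph{single} Whittaker pair $(S_0,\varphi_0)$ with $\Gamma\varphi_0=\cO$ and a single maximal isotropic subspace $I_0\subset\fu_{S_0}$ for which $\cF^{I_0}_{S_0,\varphi_0}[\eta]$ is Eulerian on $\pi$, then every other Fourier--Jacobi coefficient with $\Gamma\varphi'=\cO$ is also Eulerian. The natural choice for $(S_0,\varphi_0)$ is the \emph{neutral} pair: fix an $\sll_2$-triple $(e,h,f)$ in $\fg_{\K}$ whose nilpositive element $e$ is identified, via a chosen invariant pairing, with $\varphi_0\in\cO$, set $S_0=h$, and take $I_0$ to be any maximal isotropic subspace of $\fu_{h}$ with respect to the symplectic form $\omega_{\varphi_0}$. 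In this normalization $\cF^{I_0}_{S_0,\varphi_0}$ is precisely the global Fourier--Jacobi period on $\pi$ attached to $\cO$.

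Eulerianity of this distinguished global period then follows from the local uniqueness of the corresponding Fourier--Jacobi models on the minimal representation, established by Loke--Savin in \cite{LokeSavin2006} in the $D_n$ case and by Kobayashi--Savin in \cite{KobayashiSavin} in the $E_7$ case. At every place $\nu$ of $\K$ these results assert that the space of local Fourier--Jacobi functionals on $\pi_\nu$ is at most one-dimensional. For a pure tensor $\eta=\otimes_\nu\eta_\nu$, a standard argument then produces, for each $\nu$, a local functional $\cF^{I_0}_{S_0,\varphi_0,\nu}$ such that $\cF^{I_0}_{S_0,\varphi_0}[\eta](g)=\prod_\nu\cF^{I_0}_{S_0,\varphi_0,\nu}[\eta_\nu](g_\nu)$, and Theorem~\ref{thm:transfer} then transfers this factorization to an arbitrary Fourier--Jacobi pair $(H,\varphi,I')$ with $\Gamma\varphi=\cO$.

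The main obstacle is matching the precise shape of the local model studied in \cite{LokeSavin2006,KobayashiSavin} with the local version of our isotropic Fourier coefficient for the neutral pair and a maximal isotropic $I_0$. Concretely, one must check that $I_0/\fn_{h,\varphi_0}$ is a Lagrangian subspace of the symplectic quotient $\fu_{h}/\fn_{h,\varphi_0}$, that integration over $I$ together with the twist by $\chi_{\varphi_0}$ matches the (Heisenberg) Fourier--Jacobi model used by those authors, and that the uniqueness statement holds uniformly at all places (archimedean and non-archimedean) for the irreducible unitary components $\pi_\nu$; a delicate point is also verifying that for each rational form of $\cO_{\min}$ appearing in $\WS(\pi)$ the corresponding local Whittaker datum is indeed in the support of $\pi_\nu$, so that the factorization is nontrivial.
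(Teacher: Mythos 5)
Your reduction is set up correctly: you observe that on a minimal representation any nonzero $\varphi$ in the support must lie in the minimal orbit, and that by Theorem~\ref{thm:transfer} it suffices to prove Eulerianity for one Fourier--Jacobi coefficient per rational orbit. This much matches the paper. But there are two substantive gaps in the rest of the argument.

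First, your choice of representative is the \emph{neutral} pair $(h,\varphi_0)$, giving a Heisenberg-type Fourier--Jacobi period with a nontrivial Lagrangian piece in $\fg^h_1$. The local uniqueness statements you attribute to \cite{LokeSavin2006} and \cite{KobayashiSavin} do not directly concern that model: those papers establish rank theory and uniqueness of $\mathbf{U}_\text{ab}$-equivariant functionals for the parabolic with \emph{abelian} unipotent radical $\mathbf{U}_\text{ab}$ (together with results from \cite{HKM} for the archimedean place), not of Jacobi-group periods for the Heisenberg parabolic attached to the neutral pair. The paper therefore chooses the pair $(S_\text{ab},\psi)$ coming from the abelian parabolic as its distinguished representative, where $\fg^{S_\text{ab}}_1=0$, so $\cF_{S_\text{ab},\psi}$ is automatically a Fourier--Jacobi coefficient and matches the model for which uniqueness is actually known. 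This is also where the restriction to $D_n$ and $E_7$ comes from: it is exactly where an abelian maximal parabolic exists and the cited uniqueness results apply; for $E_8$, say, one only has a Heisenberg parabolic and the argument breaks down.

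Second, and more seriously, you cannot invoke local uniqueness of models on $\pi_\nu$ without first knowing that $\pi_\nu$ is itself a \emph{local} minimal representation. Global minimality of $\pi\subset\cA(G)$ (a statement about the global wave-front set) does not a~priori imply that each local factor is minimal. The crux of the paper's proof is precisely this bridge: using Theorem~\ref{thm:IntTrans} one shows that $\cF_{S_\text{ab},\psi}$ is nonzero on $\pi$ while $\cF_{S_\text{ab},\psi'}$ vanishes for $\psi'$ outside the minimal orbit, so $\pi$ has global rank~$1$; then \cite[Thm.~9.4]{KobayashiSavin} gives local rank~$1$ at every place; unitarity plus \cite[\S 2, Thm.~8]{LokeSavin2006} then forces each $\pi_\nu$ to be minimal. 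Only after this chain can one cite the one-dimensionality $\dim\bigl((\pi_\nu^\infty)^*\bigr)^{\mathbf{U}_\text{ab}(\K_\nu),\chi}=1$ from \cite[Props.~7.2, 8.3]{KobayashiSavin} and \cite[Thms.~A, B]{HKM}. Your proposal jumps straight to the local uniqueness conclusion without establishing local minimality, which is the part of the argument that does the real work.
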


From Theorems~\ref{thm:transfer} and~\ref{thm:Triv} together with local uniqueness results from~\cite{GGP,DSZ} we have the following corollary.

\begin{maincorollary}[\S\ref{sec:Bessel}]
  \label{cor:EuAD}
  Suppose that $\mathbf{G}$ is split of type $B_n$ or $D_n$ and let $\pi$ be an irreducible admissible automorphic representation of $\mathbf{G}(\mathbb{A})$.
  Let $(H,\varphi)$ be a Whittaker pair with $\Gamma\varphi\in\WS(\pi)$ such that the complex orbit of $\varphi$ is $(31\ldots1)$. Then any Fourier--Jacobi coefficient $\cF^I_{H,\varphi}$ is Eulerian on~$\pi$.
\end{maincorollary}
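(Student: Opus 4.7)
The plan is to combine the transfer theorem (Theorem~\ref{thm:transfer}) and the hidden-symmetry theorem (Theorem~\ref{thm:Triv}) with local multiplicity-one for Bessel models from~\cite{GGP,DSZ}, following a pattern similar to the derivation of Corollary~\ref{cor:min-rep}.

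First, I would apply Theorem~\ref{thm:transfer} to reduce the problem to a single, carefully chosen Fourier--Jacobi coefficient. It suffices to exhibit one Whittaker pair $(H_0,\varphi_0)$ with $\Gamma\varphi_0=\Gamma\varphi$ together with one maximal isotropic subspace $I_0$ such that $\cF^{I_0}_{H_0,\varphi_0}$ is Eulerian on $\pi$, since Eulerianity for every $(H,\varphi,I)$ in the statement then follows by Theorem~\ref{thm:transfer}. My choice is a \emph{neutral} Whittaker pair: take an $\mathfrak{sl}_2$-triple $(e_0,H_0,f_0)$ in $\fg=\mathfrak{so}_N$ with $e_0$ in the complex orbit $(31\ldots1)$, and set $\varphi_0(x)\coloneqq\langle f_0,x\rangle$ for a fixed $\Ad$-invariant bilinear form on $\fg$.

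Second, the $(31\ldots1)$ assumption produces a major simplification. The natural representation $\K^N$ decomposes under the triple as $V_3\oplus(N-3)V_1$, where $V_k$ denotes the irreducible of dimension $k$; therefore $\ad(H_0)$ has only even eigenvalues on $\fg$, giving $\fg=\fg_{-2}\oplus\fg_0\oplus\fg_2$. Consequently $\fu_{H_0}=\fg_2$, and because $\fg_4=0$ the antisymmetric form $\omega_{\varphi_0}(X,Y)=\varphi_0([X,Y])$ vanishes identically on $\fu_{H_0}$. Hence $\fn_{H_0,\varphi_0}=\fu_{H_0}$, the only maximal isotropic subspace of $\fu_{H_0}$ is $\fu_{H_0}$ itself, and $\cF^{I_0}_{H_0,\varphi_0}[\eta]$ collapses to the ordinary neutral Fourier coefficient $\cF_{H_0,\varphi_0}[\eta]$.

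Third, I would identify this neutral Fourier coefficient with a Bessel period in the normalization of~\cite{GGP,DSZ}. The centralizer $Z_\bfG(H_0,\varphi_0)$ contains a subgroup $\bfG'\cong\mathrm{SO}_{N-3}$ acting on the orthogonal complement of the three-dimensional block of the $\mathfrak{sl}_2$-triple, yielding manifest invariance of $\cF_{H_0,\varphi_0}[\eta]$ under $\bfG'(\A)$. Theorem~\ref{thm:Triv} then furnishes the additional invariance under all unipotent elements of $Z_\bfG(H_0,\varphi_0)(\A)$, which is exactly what is needed to place the coefficient in the framework of the multiplicity-one theorems of~\cite{GGP,DSZ}. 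Those theorems bound the dimension of the space of local Bessel functionals on each $\pi_\nu$ by one, forcing the global coefficient to factor as an Euler product on pure tensors $\eta\in\pi$. A final application of Theorem~\ref{thm:transfer} propagates this Eulerianity to every $(H,\varphi,I)$ as in the corollary.

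The principal obstacle I expect is the precise matching in Step~3 between the Whittaker-pair coefficient $\cF_{H_0,\varphi_0}$ and the classical Bessel model of~\cite{GGP,DSZ}, including reconciling the unipotent subgroup, the additive character, and the invariance subgroup. Theorem~\ref{thm:Triv} is the decisive ingredient here, supplying the previously hidden unipotent invariance that is not manifest from the general definition of the Whittaker-pair Fourier coefficient.
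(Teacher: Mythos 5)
Your proposal follows essentially the same route as the paper: reduce via Theorem~\ref{thm:transfer} to a single Whittaker pair with no odd $\ad$-eigenvalues (the paper's explicit $S_{\alpha_1}$ is in fact a neutral element for $\varphi$, so your $(H_0,\varphi_0)$ coincides with the paper's choice up to conjugation), apply Theorem~\ref{thm:Triv} to obtain invariance under the centralizer, invoke local uniqueness of Bessel models from~\cite{GGP,DSZ}, and transfer back. One imprecision worth flagging: you describe the invariance of $\cF_{H_0,\varphi_0}[\eta]$ under $\bfG'(\A)$ as ``manifest,'' but only the $\bfG'(\K)$-invariance is manifest (from \cite[Lemma~3.2.8]{LeviDist}); the full adelic invariance is \emph{not} automatic and is exactly what Theorem~\ref{thm:Triv} supplies, combined with the observation --- implicit in your argument but made explicit in the paper --- that the split group $\bfG'\cong\SO_{n-1,n-1}$ (type~$B_n$) or $\SO_{n-1,n-2}$ (type~$D_n$) is generated by its unipotent elements, so that invariance under unipotents propagates to the whole group.
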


The next result concerns a class of spherical Eisenstein series and their Fourier--Jacobi coefficients (in particular parabolic Fourier coefficients) that are of interest in string theory as reviewed in~\cite{FGKP}.
The result follows from analyzing degenerate Whittaker coefficients using the reduction formula of Theorem~\ref{thm:reduction}, together with Theorem~\ref{thm:transfer}.

\begin{maintheorem}[\S\ref{sec:Eisenstein}]
  \label{thm:ntm-Eisenstein}
  Let $\mathbf{G}$ be a simple algebraic group split and defined over $\mathbb{Q}$ of type A, D, or E.  Then, for a class of spherical Eisenstein series in minimal and next-to-minimal automorphic representations of $\mathbf{G}(\mathbb{A}_\mathbb{Q})$, listed in Table~\ref{tab:SmallReps}, all Fourier--Jacobi coefficients of maximal rank are Eulerian.
\end{maintheorem}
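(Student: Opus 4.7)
The plan is to combine the transfer theorem (Theorem~\ref{thm:transfer}) with an explicit analysis of a single carefully chosen Fourier coefficient per orbit in the Whittaker support, using the reduction formula of Theorem~\ref{thm:reduction}. By Theorem~\ref{thm:transfer}, Eulerianity of $\cF^I_{H,\varphi}[\eta]$ is an invariant of the orbit $\Gamma\varphi \in \WS(\pi)$, independent of the choice of Whittaker pair in that orbit or of the maximal isotropic subspace $I$. Therefore, for each maximal orbit in $\WS(\pi)$ (the minimal or the next-to-minimal orbit, according to which representation is under consideration), it suffices to exhibit \emph{one} Whittaker pair $(H_0,\varphi)$ and \emph{one} isotropic subspace $I_0$ for which $\cF^{I_0}_{H_0,\varphi}[\eta]$ is Eulerian.

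For each such orbit, I would take a Whittaker pair $(H_0,\varphi)$ for which $N_{H_0,\varphi}$ is the full unipotent radical $\mathbf{N}(\mathbb{A})$ of the Borel, so that $\cF_{H_0,\varphi}[\eta]$ is a \emph{degenerate Whittaker coefficient} attached to a character $\chi_\varphi$ whose orbit is $\Gamma\varphi$. The reduction formula of Theorem~\ref{thm:reduction} expresses such a degenerate Whittaker coefficient as an integral of Fourier--Jacobi coefficients $\cF^{I'}_{S,\varphi}[\eta]$ for a related Whittaker pair $(S,\varphi)$ in the same orbit; equivalently, it lets one deduce Eulerianity of $\cF^{I_0}_{H_0,\varphi}[\eta]$ from Eulerianity of the degenerate Whittaker coefficient. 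The task thereby reduces to proving Eulerianity of the degenerate Whittaker coefficients of the specific spherical Eisenstein series listed in Table~\ref{tab:SmallReps}.

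For these particular Eisenstein series, these degenerate Whittaker coefficients have been computed essentially explicitly in prior work: for the minimal case in~\cite{MillerSahi,LeviDist,FGKP} (using the Piatetski-Shapiro--Shalika style unfolding), and for the next-to-minimal simply-laced case in~\cite{NTMSimplyLaced}. The point is that because $\pi$ is minimal or next-to-minimal, the Bruhat unfolding of the Eisenstein series against $\chi_\varphi$ collapses --- most Weyl cells contribute zero by smallness of the wave-front set --- to a manifestly factorizable expression over the places of $\mathbb{K}=\mathbb{Q}$. The hidden-symmetry result of Theorem~\ref{thm:Triv} then removes residual ambiguity in the choice of $I_0$.

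The principal obstacle is the next-to-minimal case, where the Bruhat unfolding retains several non-vanishing cells and their joint Eulerianity is delicate: the existing explicit analysis of~\cite{NTMSimplyLaced} covers parabolic Fourier coefficients with a character of maximal rank, but not all maximal-rank Fourier--Jacobi coefficients directly. Here Theorem~\ref{thm:transfer} does the crucial extra work, transferring Eulerianity from the maximal-parabolic Fourier coefficients treated in \cite{NTMSimplyLaced} (which are themselves Fourier--Jacobi coefficients, since their Whittaker pair $(S,\varphi)$ has $\mathrm{ad}(S)$ without eigenvalue $1$) to arbitrary Fourier--Jacobi coefficients of maximal rank. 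A uniform case-by-case check across types A, D and E, invoking the tables of unfolded coefficients in the cited references, completes the argument.
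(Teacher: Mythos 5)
Your high-level strategy matches the paper's: by Theorem~\ref{thm:transfer}, Eulerianity of Fourier--Jacobi coefficients on an orbit in the Whittaker support is a property of the orbit alone, so it suffices to verify it on a single well-chosen coefficient, and a degenerate Whittaker coefficient (a Fourier coefficient on the full unipotent radical $\mathbf{N}$ of the Borel, whose character is supported on simple roots in the given orbit) is the natural test case because there is then no choice of isotropic subspace to make --- $\mathfrak{u}_{H_0}=\mathfrak{n}_{H_0,\varphi}=\mathfrak{n}$, so the coefficient is automatically a Fourier--Jacobi coefficient.

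However, several of the supporting claims in your write-up are off. First, you mischaracterize Theorem~\ref{thm:reduction}: it is not a representation of a degenerate Whittaker coefficient as an integral of Fourier--Jacobi coefficients (that is Theorem~\ref{thm:IntTrans}\ref{itm:Int}), but rather a finite sum over Weyl cosets $W/W'$ of intertwining factors $M(w_c^{-1},\lambda)$ times \emph{generic} Whittaker coefficients on the subgroup $\mathbf{G}'$ determined by the support of $\psi$. Its role is computational: it is the tool by which the paper shows, case by case in types $A$, $D$, $E_6$, $E_7$, $E_8$, that at the special values of $s$ in Table~\ref{tab:SmallReps} all but one term in the sum vanishes (via vanishing of the intertwiner $M(w_c^{-1},\lambda)$ or of the subgroup coefficient $\mathcal{W}^{\mathbf{G}'}_\psi$), leaving a single Eulerian product. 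Second, the invocation of Theorem~\ref{thm:Triv} is spurious here: it is used in \S\ref{sec:Bessel} for Bessel models, but plays no role in \S\ref{sec:Eisenstein}, and there is no ``residual ambiguity in $I_0$'' for a degenerate Whittaker coefficient to resolve. Third, and most substantively, you delegate the Eulerianity of the degenerate Whittaker coefficients to prior work (\cite{MillerSahi,LeviDist,NTMSimplyLaced}), but these references do not establish the required Eulerianity at maximal rank for the next-to-minimal representations across all listed realizations; the paper does the reduction-formula computations itself in \S5.3 precisely because such citations were not available. Your appeal to existing literature is therefore a genuine gap: one must carry out (or locate a genuine reference for) the explicit verification that exactly one coset survives at each special $s$-value, for each group and each next-to-minimal orbit in Table~\ref{tab:SmallReps}.
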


For $G=\GL_n(\A)$, we deduce from \cite{MogWalds,Ginz,GGS} a very general theorem.

\begin{maintheorem}[\S\ref{subsec:GL}]\label{thm:GL}
Let $\pi$ be an irreducible admissible automorphic representation in the discrete spectrum
of $\GL_n(\A)$. Let $(H,\psi)$ be a Whittaker pair with $\Gamma\psi\in \WS(\pi)$. Then any Fourier--Jacobi coefficient $\cF_{H,\psi}^I$ is Eulerian on $\pi$.
\end{maintheorem}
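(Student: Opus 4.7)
The plan is to combine the Mœglin--Waldspurger classification of the discrete spectrum of $\GL_n(\A)$ with the Eulerianity of a single ``seed'' Fourier coefficient, and then let Theorem~\ref{thm:transfer} do the rest of the work.

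First I would invoke \cite{MogWalds}: every irreducible admissible representation $\pi$ in the discrete spectrum of $\GL_n(\A)$ is a Speh (residual) representation $\Delta(\tau,m)$ attached to a factorization $n=dm$ and a cuspidal automorphic representation $\tau$ of $\GL_d(\A)$. Second, I would use the computation of the Whittaker support of Speh representations, carried out in \cite{Ginz} and reformulated within the Whittaker-pair framework in \cite{GGS}: $\WS(\Delta(\tau,m))$ consists of the single nilpotent orbit $\Orb_\pi$ of $\GL_n$ attached to the partition $(m^d)$. In particular, the hypothesis $\Gamma\psi\in\WS(\pi)$ forces $\Gamma\psi=\Orb_\pi$.

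Next, for the single orbit $\Orb_\pi$ I would exhibit one Whittaker pair $(H_0,\varphi_0)$ with $\Gamma\varphi_0=\Orb_\pi$ together with a Fourier--Jacobi coefficient $\cF_{H_0,\varphi_0}^{I_0}$ that is known to be Eulerian on $\pi$. A natural choice is the ``Shalika-type'' Whittaker pair attached to the partition $(m^d)$: its Fourier coefficient, after the unfolding manipulations of \cite{Ginz}, becomes essentially the Shalika/Friedberg--Jacquet period of the inducing cuspidal datum $\tau$, whose Eulerianity is classical and relies only on the local multiplicity-one property of the Shalika model. The dictionary in \cite{GGS} converts this coefficient into a Fourier--Jacobi coefficient in the sense of our Definition~\ref{def:extended-Whittaker-Fourier-coefficient}; the additional integration along a maximal isotropic subspace is against a unitary character of an abelian (adelic) group and therefore preserves factorizability over places.

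Finally, Theorem~\ref{thm:transfer} applied to the pairs $(H_0,\varphi_0)$ and $(H,\psi)$ (which both give the same orbit $\Orb_\pi\in\WS(\pi)$) transfers Eulerianity from $\cF_{H_0,\varphi_0}^{I_0}$ to $\cF_{H,\psi}^{I}$ for any choice of maximal isotropic $I$, completing the proof. The main obstacle I expect is precisely the seed step: one must pin down a version of the Shalika/Speh-type Fourier coefficient for which Eulerianity is formally on record in a form directly compatible with our Whittaker-pair conventions, and verify that the associated nilpotent orbit is indeed the Speh orbit $(m^d)$. Once the seed is established, the transfer theorem yields the general statement uniformly in the data $(H,\psi,I)$.
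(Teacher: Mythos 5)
Your high-level skeleton matches the paper's: invoke M\oe{}glin--Waldspurger to write $\pi = \Delta(\tau, b)$ for a cuspidal $\tau$ on $\GL_a(\A)$ with $n=ab$, identify the unique top orbit, produce one Eulerian Fourier coefficient attached to it, and then let Theorem~\ref{thm:transfer} propagate Eulerianity to every Fourier--Jacobi coefficient with $\varphi$ in that orbit. The devil is in the seed, and there are two problems with your version.

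First, a small but genuine error in the statement of the top orbit: the Whittaker support of $\Delta(\tau,b)$ corresponds to the partition $a^b$, i.e.\ $b$ parts each of size $a$ (so in your notation $d^m$, not $(m^d)$; you wrote the conjugate partition). A sanity check: for $\tau=\triv$ on $\GL_1$, $\Delta(\triv,n)$ is the trivial representation of $\GL_n$ with zero wave-front set, partition $1^n$; your formula would give the regular orbit $(n)$.

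Second, and more substantively, the Shalika/Friedberg--Jacquet period is not a Fourier coefficient $\cF_{S,\varphi}^I$ in the sense of Definition~\ref{def:extended-Whittaker-Fourier-coefficient}: the Shalika subgroup of $\GL_{2m}$ contains a reductive copy of $\GL_m$, whereas all of our coefficients integrate over unipotent groups against a character. There is no ``dictionary'' in \cite{GGS} that converts the Shalika period into a Fourier--Jacobi coefficient, and --- as you yourself flag --- this is the real obstacle; without it your seed step is not established. The paper avoids the issue entirely with a much shorter, purely inductive argument. It chooses as seed the top degenerate Whittaker coefficient $\cF_{H,\varphi}$ with $H=\diag(n-1,n-3,\dots,1-n)$ and $\varphi$ the trace pairing against the lower-triangular Jordan matrix with $b$ blocks of size $a$. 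For the induction step one takes the constant term $c_U$ along the maximal parabolic $Q=LU$ with blocks $(b-1)a,\,a$; by \cite[Lemma~2.4]{Offen-Sayag}, $c_U(\Delta(\tau,b))\cong\Delta(\tau,b-1)\boxtimes\tau$ on $L$, and since $\cF_{H,\varphi}=\cF^L_{H,\varphi}\circ c_U$ with $\cF^L_{H,\varphi}$ splitting as an exterior product of the analogous coefficients on the two $\GL$ factors, the induction closes at $b=1$, where Eulerianity is classical multiplicity-one for generic Whittaker models. Your proposal is salvageable in spirit, but you would need to replace the Shalika-period seed by a coefficient that is honestly of the form $\cF^I_{S,\varphi}$; the paper's constant-term induction is the cleanest way to do this.
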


\begin{remark}
  We expect that Theorems~\ref{thm:ntm-Eisenstein} and~\ref{thm:GL} hold in wider generality for other automorphic forms and other groups as well. We remark here that for a neutral pair $(H,\varphi)$, the dimension of $I$ is half the dimension of the orbit of $\varphi$, and thus this expectation is in compliance with the dimension formula of \cite{GinzDim}.
\end{remark}

\begin{remark}
  A local result related to Theorem~\ref{thm:transfer} is proven in \cite{MW}. Namely, \cite{MW} consider non-archimedean local counterparts of Fourier coefficients, called degenerate Whittaker models.
  They prove that for all Whittaker pairs $(H,\varphi)$ with $\varphi$ in the local Whittaker support, the dimension of the corresponding local Whittaker model depends only on the orbit of $\varphi$.

While local uniqueness implies Eulerianity, the result of \cite{MW} cannot in general replace the use of Theorem \ref{thm:transfer} for two reasons.
First, \cite{MW} do not consider the archimedean places, and second, the local Whittaker support (which equals the local wave-front set) can be bigger than the global one.

\end{remark}

%%%%

After introducing the necessary preliminary background material in \S\ref{sec:prel}, we begin in \S\ref{sec:thm} by establishing some general results including transfer of Eulerianity (Theorem~\ref{thm:transfer}) and hidden invariance of Fourier coefficients (Theorem~\ref{thm:Triv}) with respect to the left regular action using techniques developed in \cite{LeviDist,GGS:support}.
In the same section, we also show Eulerianity of Fourier coefficients for $\GL_n(\mathbb{A})$ (Theorem~\ref{thm:GL}).
In \S\ref{sec:applications-to-small-reps} we apply Theorem~\ref{thm:transfer} to Fourier coefficients for small automorphic representations.
Specifically, in \S\ref{sec:min-rep} we use the local uniqueness results of~\cite{KobayashiSavin} to study Eulerianity in the case of minimal representations (Corollary~\ref{cor:min-rep}).
In \S\ref{sec:Bessel} we use Theorem~\ref{thm:Triv} and the local uniqueness results of so-called \emph{Bessel models} \cite{GGP,DSZ} to prove that, for next-to-minimal automorphic representations of $D_n$, Fourier--Jacobi coefficients with characters in the orbit with partition $(31^{2n-3})$ are Eulerian.

In \S\ref{sec:Eisenstein} we focus on certain spherical Eisenstein series realizing minimal and next-to-minimal automorphic representations.
We show that degenerate Whittaker coefficients of maximal rank for that representation are Eulerian by direct computation using the reduction formula of Theorem~\ref{thm:reduction} from~\cite{FKP}.
We then use Theorem~\ref{thm:transfer} to transfer Eulerianity to other Fourier--Jacobi coefficients, including other parabolic coefficients.

\medskip

\noindent\textbf{Acknowledgements.}
We are grateful for helpful discussions and correspondence with Guillaume Bossard, Solomon Friedberg, Gordan Savin and Eitan Sayag.
We especially thank Gordan Savin for his extensive explanations on minimal representations.

D.G.\ was partially supported by ERC StG grant 637912 and BSF grant 2019724. H.G.\ and D.P.\ were supported by the Swedish Research Council (Vetenskapsr\aa det), grants 2018-06774 and 2018-04760 respectively.
S.S.\ was partially supported by NSF grants DMS-1939600 and DMS-2001537, and Simons' foundation grant 509766.

\section{Preliminaries}
\label{sec:prel}

Let $\mathbb{K}$ be a number field, $\mathfrak{o}$ its ring of integers and $\mathbb{A}$ its ring of adeles. Let $\mathbf{G}$ be a reductive algebraic group defined over $\mathbb{K}$, $G = \mathbf{G}(\mathbb{A})$ and $\Gamma = \mathbf{G}(\mathbb{K})$.
Let $\mathfrak{g}_\mathbb{K}$ be the Lie algebra of $\mathbf{G}(\mathbb{K})$ which we will often abbreviate to $\mathfrak{g}$.
We will use a similar notation for the Lie algebra of any subgroup of $\mathbf{G}(\mathbb{K})$.

Conversely, let $\mathfrak{u} = \mathfrak{u}_\mathbb{K}$ be a nilpotent Lie subalgebra of $\mathfrak{g}$.
It defines a unipotent algebraic subgroup $\mathbf{U}$ of $\mathbf{G}$ such that $\mathbf{U}(\mathbb{K}) = \Exp(\mathfrak{u}_\mathbb{K})$ and $\mathbf{U}(\mathbb{A}) = \Exp(\mathfrak{u}_\mathbb{K} \otimes_\mathbb{K} \mathbb{A})$.
For convenience we will use the notation $U \coloneqq \mathbf{U}(\mathbb{A})$ and $[U] \coloneqq (U \cap \Gamma) \bs U = \mathbf{U}(\mathbb{K}) \bs \mathbf{U}(\mathbb{A})$.

Let $\mathbb{K}_\nu$ denote the completion of $\mathbb{K}$ with respect to a place $\nu$ and $\mathfrak{o}_\nu$ the corresponding completion of the ring of integers $\mathfrak{o}$.
For the following definition we introduce the notation $K_\text{fin} \coloneqq \prod_{\text{finite } \nu} \mathbf{G}(\mathfrak{o}_\nu)$ and $G_\text{inf} \coloneqq \prod_{\text{infinite } \nu} \mathbf{G}(\mathbb{K}_\nu)$.

\begin{definition}
  \label{def:automorphic-form}
  An \emph{automorphic form} on $G = \mathbf{G}(\mathbb{A})$ is a function on $G$ that is left $\Gamma\text{-invariant}$, finite under the right-action of $K_\text{fin}$, and smooth under the right-action of $G_\text{inf}$.
  Furthermore, it should be of moderate growth, and finite under the action of the center $\mathcal{Z}$ of the universal enveloping algebra of $\mathfrak{g}$  (see e.g. \cite{FGKP} for details).
  We will denote the space of automorphic forms on $G$ by $\mathcal{A}(G)$.
\end{definition}

Some of our intermediate results hold for a space of functions more general than $\mathcal{A}(G)$, where the requirements of moderate growth and finiteness under $\mathcal{Z}$ are dropped.
We call these functions \emph{automorphic functions}.
See \cite{LeviDist} for details.

We are interested in different types of Fourier coefficients of automorphic functions. A convenient way of describing different Fourier coefficients is via with so-called \emph{Whittaker pairs} which we will now introduce.
Let $S$ be a semi-simple element in $\mathfrak{g}$.
We say that $S$ is \emph{rational} if $\ad S$ has eigenvalues in $\mathbb{Q}$.
Let also $\mathfrak{g}^*$ be the vector space dual of $\mathfrak{g}$.
\begin{definition}
  \label{def:whittaker-pair}
  A \emph{Whittaker pair} is an ordered pair $(S, \varphi) \in \mathfrak{g} \times \mathfrak{g}^*$ where $S$ is rational semi-simple and $\ad^*(S)\varphi = -2\varphi$.
\end{definition}
The fact that $\varphi$ is in the $(-2)$-eigenspace of $\ad^*(S)$ implies that $\varphi$ is nilpotent
in the sense that the closure of its co-adjoint orbit contains the trivial element. We shall also use that it is paired with a nilpotent element $f_\varphi\in\mathfrak{g}$ using the Killing form. For reductive groups, the Killing form can be degenerate but there is a unique nilpotent element that it is paired with.

Since eigenspaces of rational semi-simple elements will figure frequently throughout the paper we introduce the following notation.
We denote by $\mathfrak{g}^S_\lambda$ the eigenspace of $\ad(S)$ in $\mathfrak{g}$ with eigenvalue $\lambda \in \mathbb{Q}$.
Let also $\mathfrak{g}^S_{>\lambda} \coloneqq \bigoplus_{\mu > \lambda} \mathfrak{g}^S_\mu$ and similarly for other inequalities as well as for $\mathfrak{g}$ and $\ad(S)$ replaced by $\mathfrak{g}^*$ and $\ad^*(S)$.
Moreover, for $\varphi \in \mathfrak{g}^*$ we also denote by $\mathfrak{g}_\varphi$ the stabilizer of $\varphi$ in $\mathfrak{g}$ under the coadjoint action.

For a Whittaker pair $(S, \varphi)$ let $\mathbf{N}_{S,\varphi}$ be the unipotent subgroup of $\mathbf{G}$ defined by its Lie algebra
\begin{equation}
  \label{eq:N-S-phi}
  \mathfrak{n}_{S,\varphi} = \mathfrak{g}^S_{>1} \oplus (\mathfrak{g}^S_1 \cap \mathfrak{g}_\varphi) \, .
\end{equation}
via $N_{S,\varphi} \coloneqq \mathbf{N}_{S,\varphi}(\mathbb{A}) = \Exp(\mathfrak{n}_{S,\varphi} \otimes_\mathbb{K} \mathbb{A})$.
Then, $\varphi$ defines a character $\chi_\varphi$ on $[N_{S,\varphi}] \coloneqq \mathbf{N}_{S,\varphi}(\mathbb{K}) \bs \mathbf{N}_{S,\varphi}(\mathbb{A})$ as follows.
Fix a non-trivial unitary character $\chi$ on $\mathbb{A}$ trivial on $\mathbb{K}$  and define $\chi_\varphi$ by $\chi_\varphi(n) = \chi(\varphi(\log n))$ for $n \in N_{S,\varphi}$ where the logarithm is well-defined since $n$ is unipotent.

\begin{definition}
  \label{def:fourier-coeff}
  We define the \emph{Fourier coefficient} of an automorphic form $\eta \in \mathcal{A}(G)$ with respect to a Whittaker pair $(S, \varphi)$ as
  \begin{equation}
    \mathcal{F}_{S,\varphi}[\eta](g) \coloneqq \intl_{[N_{S,\varphi}]} \eta(ng) \chi_\varphi(n)^{-1} \, dn
  \end{equation}
  where $g \in G_\mathbb{A}$ and $dn$ denotes the pushforward of the Haar measure of $N_{S,\varphi}$ normalized to be a probability measure.
  If $(S,\varphi) = (0,0)$ the corresponding Fourier coefficient is defined to be the automorphic form $\eta$ itself.
\end{definition}
Although the above integral is over the compact space $[N_{S,\varphi}]$, we will show that for certain $(S,\varphi)$ and $\eta$ it is \emph{Eulerian} meaning that it factorizes as an Euler product over the places $\nu$ of $\mathbb{K}$.

\begin{definition}
  We say that a function $f : \mathbf{G}(\mathbb{A}) \to \mathbb{C}$ is \emph{Eulerian} if there exist functions $f_\nu : \mathbf{G}(\mathbb{K}_\nu) \to \mathbb{C}$ for each place $\nu$ of $\mathbb{K}$ such that $f(g) = \prod_{\nu} f_\nu(g_\nu)$ for all $g = \prod_\nu g_\nu \in \mathbf{G}(\mathbb{A})$ where $g_\nu \in \mathbf{G}(\mathbb{K}_\nu)$.

  For an irreducible admissible automorphic representation $\pi \subset \mathcal{A}(\mathbf{G}(\mathbb{A}))$, we will say that a Fourier coefficient $\mathcal{F}_{S,\varphi}$ is Eulerian on $\pi$ if for every pure tensor $\eta \in \pi$,  $\mathcal{F}_{S,\varphi}[\eta] : \mathbf{G}(\mathbb{A}) \to \mathbb{C}$ is an Eulerian function.
\end{definition}

The Eulerianity can be checked on any pure tensor, in particular on the spherical vector, by the following lemma.

\begin{lemma}
If $\mathcal{F}_{S,\varphi}[\eta]$ is Eulerian for some pure tensor $\eta\in \pi$ then $\mathcal{F}_{S,\varphi}$ is Eulerian on $\pi$.
\end{lemma}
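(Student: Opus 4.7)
The plan is to exploit linearity in $\eta$ together with equivariance under the right regular action of $G$: the map $\eta \mapsto \cF_{S,\varphi}[\eta]$ intertwines any local operator acting on $\pi_\nu$ with the corresponding right-action on functions of the $\nu$-th component of $g$. Let $\eta^0 = \otimes_\nu \eta^0_\nu$ be the pure tensor for which $\cF_{S,\varphi}[\eta^0](g) = \prod_\nu F^0_\nu(g_\nu)$ holds, and let $\eta = \otimes_\nu \eta_\nu$ be any other pure tensor. By definition of the restricted tensor product $\pi \cong \bigotimes_\nu' \pi_\nu$, the vectors $\eta$ and $\eta^0$ agree at all but finitely many places.

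At each of the finitely many places $\nu_0$ where $\eta_{\nu_0} \neq \eta^0_{\nu_0}$, I would use irreducibility of $\pi_{\nu_0}$ to produce a local operator $T_{\nu_0}$ with $T_{\nu_0}\eta^0_{\nu_0} = \eta_{\nu_0}$: a Hecke-algebra convolution operator at non-archimedean $\nu_0$, and an element of $U(\fg_{\nu_0})\otimes\mathbb{C}[K_{\nu_0}]$ acting on the underlying Harish-Chandra module at archimedean $\nu_0$. Setting $T_\nu$ to be the identity at the remaining places, equivariance of $\cF_{S,\varphi}$ gives
\[
\cF_{S,\varphi}[\eta](g) = \cF_{S,\varphi}\bigl[(\otimes_\nu T_\nu)\eta^0\bigr](g) = \prod_\nu (\widetilde T_\nu F^0_\nu)(g_\nu),
\]
where $\widetilde T_\nu$ denotes the corresponding operation on functions of $g_\nu$ (right convolution at finite places, right differentiation or right translation at archimedean places); this is the desired Euler product.

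The one point demanding care is that arbitrary vectors of an irreducible smooth Fr\'echet representation at an archimedean place need not be reachable from a given nonzero vector by a single local operator of the form above. The escape route is that each component $\eta_\nu$ of a pure tensor in the automorphic representation $\pi$ is $K_{\text{fin}}$-finite and smooth under $G_{\text{inf}}$, hence lies in the Harish-Chandra module of $\pi_\nu$, and irreducibility of that Harish-Chandra module directly furnishes the desired $T_{\nu_0}$. With this observation the remaining Fubini-style manipulations are routine, so no further essentially new ingredient is required.
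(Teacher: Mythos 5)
Your proof is correct and follows essentially the same strategy as the paper's: exploit that two pure tensors differ at finitely many places, and use local irreducibility to produce a local operator that carries one local vector to the other, noting that the corresponding operation on functions of $g_\nu$ preserves the Euler product. The one place you deviate is the archimedean treatment: the paper applies a single smooth compactly supported function $h \in C_c^\infty(\mathbf{G}(\mathbb{K}_\nu))$ uniformly at all places, relying on the (true, but not entirely trivial) fact that for an irreducible smooth Fr\'echet representation $\pi_\nu^\infty$ and a nonzero $K_\nu$-finite vector $\eta_\nu$, the image $\pi_\nu(C_c^\infty)\eta_\nu$ contains every $K_\nu$-finite vector. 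You instead retreat to the Harish-Chandra module and appeal to its simplicity as a $(\fg,K)$-module, producing a transfer operator built from $U(\fg_{\nu_0})$ and $\mathbb{C}[K_{\nu_0}]$. Both approaches work: right differentiation and right $K_{\nu_0}$-translation affect only the $\nu_0$-factor of the Euler product, just as convolution does. Your variant has the mild advantage of avoiding the analytic input at archimedean places, while the paper's has the advantage of a uniform statement (one induction step, convolution with a single test function) across all places. Either way the induction closes in the same manner, so this is essentially the paper's proof with a small technical substitution in the archimedean step.
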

\begin{proof}
Let $\eta'\in \pi$ be another pure tensor. By the definition of restricted tensor product, $\eta$ and $\eta'$ differ only at finitely many places. The statement can be proven by induction on the number $n$ of these places. The base is $n=0$. For the induction step, let $\nu$ be a  place with $\eta_{\nu}\neq \eta'_{\nu}$. Since  $\pi_{\nu}^{\infty}$ is an irreducible smooth representation of $\bfG(\K_{\nu})$, there exists a smooth compactly supported function $h$ on  $\bfG(\K_{\nu})$ such that $\pi_{\nu}(h)\eta_{\nu}=\eta'_{\nu}$. Define $\zeta\in \pi$ by $\zeta_{\nu}=\eta'_{\nu}$ and $\zeta_{\mu}=\eta_{\mu}$ for any $\mu\neq \nu$. Let $\mathcal{F}_{S,\varphi}[\eta]=\prod_{\mu} f_{\mu}$ be the Euler product decomposition of $\mathcal{F}_{S,\varphi}[\eta]$. Let $f'_{\nu}$ be the convolution $f_{\nu}*h$. Then we have $\mathcal{F}_{S,\varphi}[\zeta]=f'_{\nu}\prod_{\mu\neq \nu} f_{\mu}$, and thus $\mathcal{F}_{S,\varphi}[\zeta]$ is Eulerian. By the induction hypothesis, it follows that $\mathcal{F}_{S,\varphi}[\eta']$ is Eulerian.
\end{proof}

By \cite[Lemma~3.2.8]{LeviDist}, if $\gamma \in \Gamma = \mathbf{G}(\mathbb{K})$ then $\mathcal{F}_{S,\varphi}[\eta](g) = \mathcal{F}_{\Ad(\gamma)S, \Ad^*(\gamma)\varphi}[\eta](\gamma g)$.
Therefore, when considering vanishing properties of Fourier coefficients, it is convenient to consider orbits of Whittaker pairs.
In particular, it will be useful to consider pairs which are related to Jacobson--Morozov $\sl_2$-triples, whose conjugacy classes are in bijection with nilpotent orbits in $\mathfrak{g}$.

\begin{definition}
  \label{def:neutral-pair}
  A Whittaker pair $(h, \varphi)$ is called \emph{neutral} if there exists an $\sl_2$-triple $(e, h, f)$ with standard commutation relations and such that $f = f_\varphi$ is the Killing form pairing with $\varphi$.
  The pair $(h, \varphi) = (0,0)$ will, for convenience, also be called neutral.
\end{definition}

The Jacobson--Morozov theorem implies that for any nilpotent $\varphi \in \mathfrak{g}^*$ there exists a neutral pair $(h, \varphi)$, and all the neutral pairs with the same $\varphi$ are conjugate (see \cite{Bou}).
By \cite[Theorem C]{GGS},
if $\eta$ is an automorphic function and $\mathcal{F}_{h,\varphi}[\eta] \equiv 0$ for some neutral pair $(h,\varphi)$, then $\mathcal{F}_{S,\varphi}[\eta] = 0$ for any Whittaker pair with the same $\varphi$.
We therefore only need to characterize the vanishing properties for Fourier coefficients of neutral pairs.

\begin{definition}
  For an automorphic form $\eta$, we define the \emph{global wave-front set} $\WO(\eta)$ to be the set of nilpotent $\Gamma$-orbits $\Oh$ in $\mathfrak{g}^*$ for which there exists a neutral pair $(h, \varphi)$ with $\varphi \in \Oh$ such that $\mathcal{F}_{h, \varphi}[\eta]$ is non-vanishing.
  We also define the \emph{Whittaker support} $\WS(\eta)$ to be the set of maximal orbits in $\WO(\eta)$.

For an automorphic representation $(\pi,V)$ in the space of automorphic forms $\mathcal{A}(G)$
we define the \emph{global wave-front set} $\WO(\pi)$ for $\pi$ be the union $\bigcup_{\eta \in V} \WO(\eta)$, and the \emph{Whittaker support} $\WS(\pi)$ to be the set of maximal orbits in $\WO(\pi)$.
\end{definition}

\begin{remark}
\begin{enumerate}[label=\textnormal{(\roman*)}]
\item We use the partial ordering of $\Gamma$-orbits defined in \cite[\S 2.4]{LeviDist}.
  If $\Oh < \Oh'$ in this partial ordering, then, for any place $\nu$ of $\mathbb{K}$, the closure (in the local topology) of $\Oh'$ in $\mathfrak{g}^*(\mathbb{K}_\nu)$ contains $\Oh$ as shown in Lemma~2.4.2 of the same paper.
\item It is easy to see that if $\pi\subset \cA(G)$ is generated by an automorphic form $\eta$ then $\WO(\pi)=\WO(\eta)$ and $\WS(\pi)=\WS(\eta)$.
\end{enumerate}
\end{remark}
The $\mathbf{G}(\mathbb{K})$-orbits used above are more refined than the standard complex orbits discussed for example in \cite{CM}.
In \cite{LeviDist} this allowed for stronger statements relating Fourier coefficients associated to different $\mathbf{G}(\mathbb{K})$-orbits.
It is, however, useful to relate them to complex orbits for which we have a coarser partial ordering given by the Zariski closure and a classification determined by Bala--Carter labels.
The Bala--Carter label of a complex orbit $\Oh$ is determined by the Cartan type of the unique conjugacy class of minimal Levi subalgebras that has non-trivial intersection with $\Oh$.
When the Cartan type does not uniquely identify the orbit, the labels are further decorated as in the case for $D_n$, $n>4$ with the orbits $(2A_1)'$ and $(2A_1)''$ corresponding to the partitions $(31^{2n-3})$ and $(2^41^{2n-8})$ respectively, both having Cartan type $2A_1 = A_1 \times A_1$.
Note that the $(2A_1)'$ orbit has representatives on the sum of the simple root spaces $\alpha_{n-1}$ and $\alpha_n$ and intersects the unipotent radical of the standard parabolic with semi-simple Levi $D_{n-1}$ while the $(2A_1)''$ orbit does not.
For $D_4$, there are two orbits corresponding to the partition $(2^4)$, because it is \emph{very even}, and the three next-to-minimal orbits are interchangeable by triality.
These complex orbits will be used to define minimal and next-to-minimal representations of $G$ in the space of automorphic forms~$\mathcal{A}(G)$.

We fix an embedding $\mathbb{K} \into \mathbb{C}$ with which we define $\mathfrak{g}_\mathbb{C} = \mathfrak{g}_\mathbb{K} \otimes_\mathbb{K} \mathbb{C}$ and the corresponding complex Lie group $\mathbf{G}(\mathbb{C})$.
We may thus map any $\mathbb{K}$-rational orbit $\mathbf{G}(\mathbb{K})x$ to a corresponding complex orbit $\mathbf{G}(\mathbb{C})x$ and by~\cite{Dok} this complex orbit does not depend on the choice of embedding.

\smallskip
\noindent
\begin{definition}
  \label{def:min-and-ntm-orbits}
  Let $\Oh$ be a non-zero $\mathbf{G}(\mathbb{C})$-orbit in $\mathfrak{g}_\mathbb{C}$ (or $\mathfrak{g}_\mathbb{C}^*$).
  We say that $\Oh$ is \emph{minimal} if its Zariski closure is a disjoint union of $\Oh$ itself and the zero orbit.
\end{definition}

\begin{definition}\label{def:ntm-orbits}
  For a semi-simple, simply-laced group $\mathbf{G}$ we will say that an orbit $\Oh\subset \fg_\C$ is \emph{next-to-minimal} if its Zariski closure is a disjoint union of $\Oh$ itself, minimal orbits, and the zero orbit with the additional condition that $\Oh$ does not intersect any component of $\mathfrak{g}_\mathbb{C}$ of type $A_2$.
\end{definition}

The same terminology is applied to $\mathbf{G}(\mathbb{K})$-orbits via the map to complex orbits described above.

If $\mathfrak{g}$ is simple and simply-laced then \cite[Lemma 2.1.1]{NTMSimplyLaced} gives that $\Oh$ is minimal if and only if it has Bala--Carter label $A_1$ and next-to-minimal if and only if it has Bala--Carter label $2A_1$ (possibly with decorations for type D as noted above).
The extra condition in Definition~\ref{def:ntm-orbits} for a next-to-minimal orbit is added to exclude for example the regular orbit for any $A_2$-type component of $\mathfrak{g}$.
Such an orbit has Bala--Carter label $A_2$ and the associated Fourier coefficients behave very differently.

Consider $\mathbf{G}(\mathbb{K})$-orbits in $\mathfrak{g}^*$ and denote by $\op{O}_{\{0\}}$ the set containing only the zero orbit, by $\op{O}_{\{1\}}$ the union of $\op{O}_{\{0\}}$ and the set of minimal orbits, and by $\op{O}_{\{2\}}$ the union of $\op{O}_{\{1\}}$ and the set of next-to-minimal orbits.

\begin{definition}
     \label{def:small}
  An automorphic form $\eta \in \mathcal{A}(G)$ is called \emph{minimal} if $\WO(\eta)$ is a subset of $\op{O}_{\{1\}}$ but not of $\op{O}_{\{0\}}$.
  It is called \emph{next-to-minimal} if $\WO(\eta)$ is a subset of $\op{O}_{\{2\}}$ but not of $\op{O}_{\{1\}}$.
  The same terminology can be applied to an automorphic representation $\pi$ by replacing $\WO(\eta)$ above with $\WO(\pi)$.
\end{definition}

\begin{definition}\label{def:dominate}
    Let $(H,\varphi)$ and $(S,\varphi)$ be two Whittaker pairs with the same $\varphi$. We say that $(H,\varphi)$ \emph{dominates} $(S,\varphi)$ if $H$ and $S$ commute and
\begin{equation}\label{=domin}
        \fg_{\varphi}\cap \fg^H_{\geq 1}\subseteq \fg^{S-H}_{\geq0}\,.
    \end{equation}
\end{definition}

\begin{lemma}[{\cite[Lemma~2.3.7 and Lemma~3.2.1]{LeviDist}}]
  \label{lem:neutral-dominates}
Let $(h,\varphi)$ be a neutral Whittaker pair, and let $Z\in \fg$ be a rational semi-simple element that commutes with $h$ and with $\varphi$. Then $(h,\varphi)$ dominates $(h+Z,\varphi)$.
Furthermore, for any Whittaker pair $(H, \varphi)$ there exists a neutral pair $(h, \varphi)$ and $Z \in \mathfrak{g}$ as above such that $H = h + Z$.
\end{lemma}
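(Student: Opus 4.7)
The plan is to handle the two assertions separately: the dominance claim is an immediate consequence of $\sl_2$-representation theory, while the decomposition $H=h+Z$ rests on a graded form of the Jacobson--Morozov theorem.

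For the dominance claim, commutativity of $h$ and $h+Z$ is immediate from the hypothesis $[h,Z]=0$. The substantive inclusion $\fg_{\varphi}\cap \fg^{h}_{\geq 1}\subseteq \fg^{Z}_{\geq 0}$ will in fact hold because the left-hand side is zero. Indeed, since $(h,\varphi)$ is neutral, $f_\varphi$ completes to an $\sl_2$-triple $(e,h,f_\varphi)$. Decomposing $\fg$ into irreducible $\sl_2$-modules, the centralizer $\fg_{f_\varphi}$ is spanned by lowest-weight vectors and therefore sits inside $\fg^h_{\leq 0}$; invariance of the pairing transports this to $\fg_\varphi \subseteq \fg^h_{\leq 0}$, so the intersection with $\fg^h_{\geq 1}$ is trivial.

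For the decomposition statement, I would begin by translating the hypothesis $\ad^*(H)\varphi=-2\varphi$ into $\ad(H)f_\varphi=-2f_\varphi$, exhibiting $f_\varphi$ as a nilpotent element of the eigenspace $\fg^{H}_{-2}$ for the rational grading of $\fg$ defined by $\ad(H)$. The key input is the graded Jacobson--Morozov theorem: for a nilpotent $f\in \fg^{H}_{-2}$ (with $H \in \fg^H_0$ already satisfying $[H,f]=-2f$), there exists an $\sl_2$-triple $(e,h,f)$ with $e \in \fg^{H}_2$ and $h\in \fg^{H}_0$; here $h$ need not coincide with $H$ and in fact has integer eigenvalues on $\fg$ even when $\ad(H)$ does not. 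Applied to $f_\varphi$ this produces a neutral pair $(h,\varphi)$, and I would set $Z:=H-h$.

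It then remains to verify the four required properties of $Z$. Commutativity with $h$ follows because $h \in \fg^{H}_0$ forces $[H,h]=0$, hence $[Z,h]=0$. Commutativity with $\varphi$ is a direct computation: $[Z,f_\varphi]=[H,f_\varphi]-[h,f_\varphi]=-2f_\varphi+2f_\varphi=0$. That $Z$ is semisimple follows from its being the difference of two commuting semisimple elements, namely $H$ (semisimple by hypothesis) and $h$ (semisimple as part of an $\sl_2$-triple). Finally, $Z$ is rational because $\ad(H)$ has rational eigenvalues by hypothesis while $\ad(h)$ has integer eigenvalues by $\sl_2$-theory, and the two operators commute, so on joint eigenspaces $\ad(Z)=\ad(H)-\ad(h)$ acts by a rational scalar. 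The main obstacle is the graded Jacobson--Morozov step; this is classical and also appears in the form we need in \cite{GGS}, and everything else reduces to bookkeeping in the $\ad(H)$-grading.
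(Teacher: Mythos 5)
Your proof is correct and matches the approach the paper (via \cite{LeviDist} and \cite{GGS}) uses: the dominance condition reduces to the $\mathfrak{sl}_2$-weight fact $\fg_\varphi\subseteq\fg^h_{\leq 0}$, which forces the left-hand side of the defining inclusion to vanish, and the decomposition $H=h+Z$ is exactly the graded Jacobson--Morozov construction. The only point you leave tacit is that $(h+Z,\varphi)$ is itself a Whittaker pair, which is what the hypothesis $[Z,\varphi]=0$ is for (giving $\ad^*(h+Z)\varphi=-2\varphi$); worth a sentence, but not a gap in substance.
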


We will also use a more general class of Fourier coefficients.
Let $(S,\varphi)$ be a Whittaker pair, $\fu_S \coloneqq  \mathfrak{g}^S_{\geq1}$ and let $\fn_{S,\varphi}$ be as in~\eqref{eq:N-S-phi}.
Define an anti-symmetric form $\omega_{\varphi}$ on $\fu_S$ by $\omega_{\varphi}(X,Y)\coloneqq \varphi([X,Y])$.
Let $\mathfrak{i} \subseteq \fu_S$ be any isotropic subspace with respect to $\omega_\varphi$ that includes~$\lie n_{S,\varphi}$. Note $\fn_{S,\varphi}$ and $\mathfrak{i}$ are ideals in $\fu_S$.
In fact, $\mathfrak{n}_{S,\varphi}$ is the radical of the restriction $\omega_\varphi|_{\mathfrak{u}_S}$ as can be seen by \cite[Lemma~3.2.6]{GGS}.
Let $I$ be the subgroup $\Exp(\mathfrak{i} \otimes_\mathbb{K} \mathbb{A})$ of $G$ and $\mathbb{T} = \{z \in \mathbb{C} : \abs{z} = 1\}$.
Then $\chi_{\varphi}^I : I \to \mathbb{T}$ defined by $\chi_\varphi^I(x)=\chi(\varphi(\log x))$ is a character of $I$ trivial on~$I \cap \Gamma$ that extends $\chi_\varphi$ on $N_{S,\varphi}\subset I$.
Indeed, $\varphi(X) \in \mathbb{K}$ for $X \in \mathfrak{i} \subset \mathfrak{g}_\mathbb{K}$ and since $\mathfrak{i}$ is isotropic, we have that $\omega_\varphi|_{\mathfrak{i} \otimes_\mathbb{K} \mathbb{A}} = 0$ and thus $\chi_\varphi^I \in \Hom(I, \mathbb{T})$.
\begin{definition}
  \label{def:extended-Whittaker-Fourier-coefficient}
  For any $\eta\in \mathcal{A}(G)$, Whittaker pair $(S,\varphi)$ and $I = \Exp(\mathfrak{i} \otimes_\mathbb{K} \mathbb{A})$ as above, we define the corresponding \emph{isotropic Fourier coefficient} as
  \begin{equation}
    \label{eq:extended_Whittaker-Fourier_coefficient}
    \cF_{S,\varphi}^{I}[\eta](g)\coloneqq \intl_{[I]}\eta(ng)\, \chi_{\varphi}^{I}(n)^{-1}\, dn.
  \end{equation}
  If $\mathfrak{i}$ is a maximal isotropic subspace of $\mathfrak{u}_S$ with respect to inclusion we call $\mathcal{F}^I_{S,\varphi}$ a \emph{Fourier--Jacobi coefficient}.
  In the case when $(S,\varphi)$ is neutral these maximal isotropic coefficients are exactly those defined in~\cite[\S 5.2.3]{HundleySayag}, but our notion is more general.
\end{definition}

In general, $\cF_{S,\varphi}^{I}[\eta]$ is a further  period integral of $\cF_{S,\varphi}[\eta]$.

\begin{remark}
  \label{rem:maximal-isotropic}
  Note that the radical $\mathfrak{n}_{S,\varphi}$ is always an isotropic subspace of $\mathfrak{u}_S \coloneqq  \mathfrak{g}^S_{\geq 1}$, but under certain conditions it is also maximal.
  Note that $\omega_\varphi$ defines a symplectic form on the abelian quotient $\mathfrak{u}_S / \mathfrak{n}_{S,\varphi}$.
  Maximal isotropic subspaces $\mathfrak{i} \subseteq \mathfrak{u}_S$ with respect to $\omega_\varphi$ are in bijection with Lagrangian subspaces in $\mathfrak{u}_S/\mathfrak{n}_{S,\varphi}$ by taking their preimages in $\mathfrak{u}_S$.
  By \cite[Lemma~3.2.6]{GGS}, $\mathfrak{u} / \mathfrak{n}_{S,\varphi}$ is isomorphic to $\mathfrak{g}^S_1 / (\mathfrak{g}^S_1 \cap \mathfrak{g}_\varphi)$.
  If $\mathfrak{g}^S_1 = \{0\}$ then $\mathfrak{u}_S = \mathfrak{n}_{S,\varphi}$ and there is only one maximal isotropic subspace $\mathfrak{n}_{S,\varphi}$.
  Furthermore, if $(S,\varphi)$ is neutral then, from \cite[Lemma~3.4.3]{CM}, $\mathfrak{g}^S_1 \cap \mathfrak{g}_\varphi = \{0\}$ which means that $\mathfrak{u}_S / \mathfrak{n}_{S,\varphi} \iso \mathfrak{g}^S_1$.
\end{remark}

\begin{theorem}[{\cite[Theorem B, and Corollaries 3.1.2 and 4.3.2, and Proposition 4.3.3]{LeviDist}}]\label{thm:IntTrans}
  Let $\pi\subset \mathcal{A}(G)$ be any automorphic representation.
Let $(H,\varphi)$ and $(S,\varphi)$ be Whittaker pairs such that $(H,\varphi)$ dominates $(S,\varphi)$ and $\Gamma \varphi \in \mathrm{WS}(\pi)$.
  Then

    \begin{enumerate}[label=\textnormal{(\roman*)}]
        \item \label{itm:LinDet}
             $\cF_{H,\varphi}$ and $\cF_{S,\varphi}$ have the same kernel as linear operators from $\pi$ to $C^{\infty}(G)$.

        \item \label{itm:Int}
Let $\mathfrak{i} \subset \fg^H_{\geq 1}$ and $\mathfrak{i}'\subset \fg^S_{\geq 1}$ be maximal isotropic subspaces. Let
    \begin{equation}
        \mathfrak{v}\coloneqq  \mathfrak{i}/(\mathfrak{i}\cap \mathfrak{i}'),  \text{ and }\, \mathfrak{v}' \coloneqq \mathfrak{i}'/((\mathfrak{i}+\fg_{{\varphi}})\cap \mathfrak{i}')
    \end{equation}
    Let $I\coloneqq \Exp(\mathfrak{i} \otimes_\mathbb{K} \mathbb{A})$, $I'\coloneqq \Exp(\mathfrak{i}' \otimes_\mathbb{K} \mathbb{A})$, ${V}\coloneqq \Exp(\fv \otimes_\mathbb{K} \mathbb{A})$ and ${V'}\coloneqq \Exp(\fv' \otimes_\mathbb{K} \mathbb{A})$.
Then for any $\eta\in \pi$ we have
 \begin{equation}\label{=HfromS_easy}
                \cF^I_{H,\varphi}[\eta](g) =
                \intl{V} \cF^{I'}_{S,\varphi}[\eta](vg) \, dv \, \quad \text{and} \quad  \cF^{I'}_{S,\varphi}[\eta](g)=\intl{V'} \cF^I_{H,\varphi}[\eta](vg) \, dv.
            \end{equation}

    \end{enumerate}
\end{theorem}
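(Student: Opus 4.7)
The plan is to connect $(H,\varphi)$ to $(S,\varphi)$ by a one-parameter family of Whittaker pairs and analyze the Fourier coefficient at each transition. Since $H$ and $S$ commute and are both rational semi-simple, the element $Z \coloneqq S - H$ is rational semi-simple, commutes with $H$, and satisfies $\ad^*(Z)\varphi = 0$ (subtract the defining equations $\ad^*(S)\varphi = -2\varphi = \ad^*(H)\varphi$). For rational $t \in [0,1]$, the pair $(H_t,\varphi) \coloneqq (H+tZ,\varphi)$ is a Whittaker pair, and the dominance condition~\eqref{=domin} persists along the path. The subalgebra $\mathfrak{n}_{H_t,\varphi}$ is constant outside a finite set of transition values $t_i$ at which a joint $(H,Z)$-eigenspace $\mathfrak{g}^H_\lambda \cap \mathfrak{g}^Z_\mu$ crosses the threshold $\lambda + t\mu = 1$, so by iteration it suffices to compare two consecutive pairs across a single transition.

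For part~\ref{itm:LinDet}, at each transition one compares the Fourier coefficients before and after the added or removed eigenspace. The dominance condition confines the $\mathfrak{g}_\varphi$-intersection of the changing piece, so that $\chi_\varphi$ is trivial on the portion lying in $\mathfrak{g}_\varphi$ and the integration over it is a harmless compact average by Fubini. On the remaining directions one performs Fourier analysis over the $\mathbb{K}$-points of an additive group: the trivial-character summand gives the target coefficient, while non-trivial characters produce Fourier coefficients at Whittaker pairs whose orbits strictly dominate $\Gamma\varphi$ in the closure ordering. The assumption $\Gamma\varphi \in \WS(\pi)$ forces every such term to vanish, establishing that $\cF_{H,\varphi}$ and $\cF_{S,\varphi}$ have the same kernel on $\pi$. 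The main obstacle is identifying exactly which orbits arise in the Fourier expansion across a transition; this is the substance of \cite[Theorem~C]{GGS} and its refinements in \cite{LeviDist}, which match contributions to specific dominated Whittaker pairs.

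For part~\ref{itm:Int}, once the kernel transfer is known, the identities~\eqref{=HfromS_easy} reduce to Fubini computations. Choose Lie-subalgebra lifts so that $V$ and $V'$ realise complementary directions in $I$ and $I'$, with the common part $I \cap I'$ serving as the shared domain of integration. The characters $\chi_\varphi^I$ and $\chi_\varphi^{I'}$ both extend $\chi_\varphi$ and agree on $I \cap I'$; moreover $\chi_\varphi^I$ is trivial on $V$ because the lift of $\mathfrak{v}$ sits inside $\mathfrak{i}$, which is $\omega_\varphi$-isotropic, and analogously $\chi_\varphi^{I'}$ is trivial on $V'$ thanks to the quotient by $\mathfrak{g}_\varphi$ in the definition of $\mathfrak{v}'$ (which kills the non-trivial pairing obstruction). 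Applying Fubini to the resulting decomposition of the integration domain yields both formulas; the apparent asymmetry between them reflects the asymmetric role of $\mathfrak{g}_\varphi$ in the dominance relation, which I expect to be the subtlest bookkeeping point.
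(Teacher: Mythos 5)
The paper does not actually prove this theorem: its ``proof'' is a citation to \cite[Theorem~B, Corollaries~3.1.2 and 4.3.2, Proposition~4.3.3]{LeviDist}, plus the small observation that the isotropic subspaces in that Corollary~4.3.2 become maximal once a maximal isotropic of $\fg^S_1\cap\fg^H_1$ is adjoined. Your proposal instead sketches the machinery from \cite{LeviDist,GGS} underlying those citations, which is a genuinely different (and more informative) route. Your outline of part~\ref{itm:LinDet} captures the right strategy: deform $(H,\varphi)$ to $(S,\varphi)$ through the pairs $(H+tZ,\varphi)$ with $Z=S-H$, note that $\fn_{H_t,\varphi}$ changes only at finitely many threshold values of $t$, and at each transition do abelian Fourier analysis on the $\mathbb{K}$-points of the changing piece, using $\Gamma\varphi\in\WS(\pi)$ to kill the terms coming from nontrivial characters because they land in strictly larger orbits. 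You are also right that the hard work is pinning down exactly which orbits arise --- that is \cite[Theorem~C]{GGS} and its quantitative refinements, and your sketch correctly defers to them.

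However, your justification in part~\ref{itm:Int} contains a concrete error. You assert that $\chi_\varphi^I$ is trivial on $V$ ``because the lift of $\mathfrak{v}$ sits inside $\mathfrak{i}$, which is $\omega_\varphi$-isotropic.'' Isotropy of $\mathfrak{i}$ means $\omega_\varphi(X,Y)=\varphi([X,Y])=0$ for $X,Y\in\mathfrak{i}$; it is the condition that makes $\chi_\varphi^I\colon x\mapsto\chi(\varphi(\log x))$ a \emph{homomorphism} on $I$, and it says nothing about $\varphi$ vanishing on $\mathfrak{i}$. Indeed $\chi_\varphi^I$ is essentially never trivial on all of $I$ --- that is the whole point of the Fourier coefficient. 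The actual reason the displayed integral over $V$ carries no character factor has to come from a grading/eigenvalue argument (tracking where $\mathfrak{v}$ sits relative to the $H$-, $S$-, and $Z$-eigenspaces and relative to $\fg_\varphi$), which your argument does not supply. This is precisely the ``subtlest bookkeeping point'' you flag at the end, so it is not a minor omission but the crux of the identity you are trying to prove.
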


\begin{proof}
  Statement~\ref{itm:LinDet} follows from~\cite[Theorem~B, and Proposition~4.3.3]{LeviDist} while~\ref{itm:Int} follows from \cite[Corollary 4.3.2]{LeviDist}, which proves the statement for a particular choice of maximal isotropic subspaces, together with \cite[Corollary~3.1.2]{LeviDist}, which relates different choices of maximal isotropic subspaces.

To be more precise, the isotropic subspaces considered in \cite[Corollary 4.3.2]{LeviDist} are not maximal, but they become maximal after one adds to them a maximal isotropic subspace of $\fg^S_1\cap \fg^H_1$, and the proof of \cite[Corollary 4.3.2]{LeviDist} work verbatim in this case.
\end{proof}

The statement and proof of Theorem~\ref{thm:IntTrans} also apply to a more general setup, considered in \cite{LeviDist}. In this setup  $G$ is a finite central extension of $\mathbf{G}(\mathbb{A})$, and $\pi$ lies in a more general space of functions on $G$, that we call automorphic functions.

\section{General results}
In this section we prove our general result on transfer of Eulerianity (Theorem~\ref{thm:transfer}). This will then be used in subsequent sections to deduce Eulerianity of (classes of) Fourier coefficients associated with minimal and next-to-minimal representations. In this section we also prove a result (Theorem~\ref{thm:Triv}) on the left-regular action of unipotent elements in $G$ on Fourier coefficients associated with Whittaker pairs. Both of these results also hold for automorphic functions and finite central extensions. At the end of the section we apply Theorem ~\ref{thm:transfer} to $\GL_n$, proving Theorem~\ref{thm:GL}.

\label{sec:thm}

\subsection{Transfer of Eulerianity}\label{subsec:Trans}

Let us recall the setup of Theorem~\ref{thm:transfer}.
We have an automorphic form $\eta$ on $G$ and two Whittaker pairs $(S, \varphi)$ and $(H, \psi)$ such that $\psi \in \Gamma \varphi \in \operatorname{WS}(\eta)$. We also have isotropic subspaces $\mathfrak{i} \subseteq \mathfrak{u}_S = \mathfrak{g}^S_{\geq1}$ and $\mathfrak{i}' \subseteq \mathfrak{u}_H = \mathfrak{g}^H_{\geq1}$ with respect to $\omega_\varphi$ and $\omega_\psi$ containing $\mathfrak{n}_{S,\varphi}$ and $\mathfrak{n}_{H,\psi}$ respectively.
We denote the corresponding unipotent subgroups in $G$ by $I$ and $I'$.
Given that $\mathcal{F}^I_{S,\varphi}[\eta]$ is Eulerian we will now prove that $\mathcal{F}^{I'}_{H,\psi}[\eta]$ is also Eulerian.
\begin{proof}[Proof of Theorem~\ref{thm:transfer}]
Let $(s,\varphi)$ and $(h,\psi)$ be neutral Whittaker pairs that dominate $(S,\varphi)$ and $(H,\psi)$ respectively.
Such pairs exist by Lemma~\ref{lem:neutral-dominates}.
By the theory of $\mathfrak{sl}_2$-triples (\cite[\S 11]{Bou}), there exists $\gamma\in \Gamma$ such that $(\Ad(\gamma)h,\Ad^*(\gamma)\psi)=(s,\varphi)$.
Let $\fr\subset \fu_s=\fg^s_{\geq 1}$ be any  maximal isotropic subspace with respect to $\omega_\varphi$.
Then $\fr'\coloneqq  \Ad(\gamma^{-1})\fr$ is a maximal isotropic subspace of~$\fu_h$ with respect to $\omega_\psi$. Let $R\coloneqq \Exp(\mathfrak{r} \otimes_\mathbb{K} \mathbb{A})$ and $R'\coloneqq \Exp(\mathfrak{r}' \otimes_\mathbb{K} \mathbb{A})$ be the corresponding unipotent subgroups of $G$.
Then, by a change of integration variable (cf.\ \cite[Lemma~3.2.8]{LeviDist}), we get that $\cF_{h,\psi}^{R'}[\eta]( g)=\cF_{s,\varphi}^{R}[\eta](\gamma g)$ for any $g \in G$.

Using Theorem~\ref{thm:IntTrans}~\ref{itm:Int}, we may now relate the above Fourier coefficients by (non-compact) adelic integral transforms:
\begin{equation}
  \mathcal{F}_{H,\psi}^{I'}[\eta]( g) \longleftrightarrow
  \mathcal{F}_{h,\psi}^{R'}[\eta](g) = \mathcal{F}_{s,\varphi}^{R}[\eta](\gamma g)  \longleftrightarrow \mathcal{F}^I_{S,\varphi}[\eta](\gamma g) \,.
\end{equation}
Since such transforms preserve Eulerianity, the theorem follows.
\end{proof}

\subsection{Hidden invariance}
\label{sec:hidden}
A Fourier coefficient $\mathcal{F}_{H,\varphi}[\eta](g)$ is $\chi_{\varphi}$-semi-invariant under left-translations of its argument $g$ by an element in the adelic unipotent subgroup $N_{H,\varphi} \subset G_\mathbb{A}$ as can be seen by a change of integration variable.
Similarly, if $\gamma \in \Gamma$, then \cite[Lemma~3.2.8]{LeviDist} shows that $\mathcal{F}_{H,\varphi}[\eta](g) = \mathcal{F}_{H',\varphi'}[\eta](\gamma g)$ where $H' = \Ad(\gamma)H$ and $\varphi' = \Ad^*(\gamma)\varphi$.
Thus, $\mathcal{F}_{H, \varphi}[\eta](g)$ is left unchanged if $\gamma$ centralizes~$(H, \varphi)$.
We will now show that there is a further, hidden invariance when $\Gamma\varphi \in \operatorname{WS}(\eta)$.

\begin{proof}[Proof of Theorem~\ref{thm:Triv}]
By \cite[Lemma 3.0.2]{GGS}, there exists an $\sl_2$-triple $\gamma=(e,h,f)$ such that $h$ commutes  with $H$, and $\varphi$ is given by the Killing form pairing with $f$.
Denote $\fq\coloneqq \fg^H_0\cap \fg_{\varphi}$ and $Z\coloneqq H-h$. Then $Z$ commutes with $\gamma$, and $\fq\subset \fg^h_{\leq 0}\cap \fg^H_0\subset \fg^Z_{\geq 0}$ since $\mathfrak{g}_\varphi \subset \mathfrak{g}^h_{\leq 0}$.
Thus, we can decompose $\mathfrak{q}$ into $Z$-eigenspaces as $\fq=\fq^Z_0\oplus \fq^Z_{>0}$ using a similar notation as for $\mathfrak{g}$.

Denote $\xi\coloneqq \cF_{H,\varphi}[\eta]$ which is a function on $G$.
Let us first show that for any $\lambda\in \Q_{>0}$ and any $u\in \Exp(\fq^Z_{\lambda} \otimes_\mathbb{K} \mathbb{A})$ that $\xi^u=\xi$, where $\xi^u(x)\coloneqq \xi(u^{-1}x)$ for $x\in G$.
Since $\varphi\in (\fg^*)^H_{-2}$ and $\fq \subset \fg^H_0$, $\varphi$ vanishes on $\fq$ and thus $\chi_{\varphi}(u)=1$.
Since $u$ centralizes the pair $(H,\varphi)$, we have that $u$ normalizes $N_{H,\varphi}$ and $\xi^u=\cF_{H,\varphi}[\eta^u]$ by a change of variable in the integral.
The measure remains unchanged since $u$ is unipotent.

Consider the Whittaker pair $(H+\lambda^{-1}Z,\varphi)$
and note that $u\in N_{H + \lambda^{-1}Z,\varphi}$ by construction. Thus, by shifts in the integration variable,
\begin{equation}\label{eq:uFix}
\cF_{H+\lambda^{-1}Z,\varphi}[\eta]=\cF_{H+\lambda^{-1}Z,\varphi}[\eta]^u=\cF_{H+\lambda^{-1}Z,\varphi}[\eta^u],
\end{equation}
meaning that $\eta^u - \eta$ is in the kernel of $\mathcal{F}_{H+\lambda^{-1} Z, \varphi}$.
The Whittaker pair $(H+\lambda^{-1}Z,\varphi)$ dominates the Whittaker pair $(H,\varphi)$. Since $\varphi\in \WS(\eta)$, Theorem~\ref{thm:IntTrans}~\ref{itm:LinDet} implies that the restrictions of
$\cF_{H+\lambda^{-1}Z,\varphi}$ and $\cF_{H,\varphi}$ to the representation generated by $\eta$ have the same kernel. From this and \eqref{eq:uFix} we obtain that $\xi^u-\xi=\cF_{H,\varphi}[\eta^u-\eta]=0$.

It remains to show that $\Exp(\mathfrak{q}^Z_0 \otimes_\mathbb{K} \mathbb{A})$ acts trivially on $\xi$.
By factorization, it is enough to show that $u = \exp(a e')$ where $a \in \mathbb{A}$ and $e' \in \mathfrak{q}^Z_0$ acts trivially.
Let $\mathfrak{g}^\gamma$ be the centralizer of the triple $\gamma$ in $\mathfrak{g}$.
The Lie algebra $\fq^Z_0$ is reductive, since it equals the centralizer of the semisimple element $Z$ in the reductive Lie algebra $\mathfrak{g}^\gamma$. By the Jacobson--Morozov theorem, $e'$ can be completed to an $\sl_2$-triple $(e',h',f')$ in $\mathfrak{q}^Z_0$.
By construction $u \in N_{h+h',\varphi}$ and, similar to above, we get that $\eta^u - \eta$ is in the kernel of $\cF_{h+h',\varphi}$.
By Lemma~\ref{lem:neutral-dominates}, the neutral Whittaker pair $(h,\varphi)$ dominates both $(h+h',\varphi)$ and~$(H,\varphi)$.
Thus, with repeated use of Theorem~\ref{thm:IntTrans}~\ref{itm:LinDet}, we get that $\eta^u - \eta$ is in the kernel of~$\cF_{H,\varphi}$ and hence $\xi^u - \xi = 0$.
\end{proof}

An application of this hidden invariance property is given by Corollary~\ref{cor:EuAD} that will be further discussed in \S\ref{sec:Bessel}.

\subsection{Eulerianity for type \texorpdfstring{$A_n$}{An}}\label{subsec:GL}

By the work of M\oe{}glin and Waldspurger \cite{MogWalds}, the discrete spectrum
of $\GL_n(\A)$ consists of Speh representations $\Delta(\tau,b)$ \cite{Speh}, where
$\tau$ runs over irreducible unitary cuspidal automorphic representations of $\GL_a(\A)$,
and $n = ab$. 
Let us show that any top Fourier coefficient of any Speh representation is Eulerian. By \cite[Proposition 5.3]{Ginz} or \cite{JL_GL}, the unique top orbit for  $\Delta(\tau,b)$ corresponds to the partition $a^b$. That is, $\WS(\Delta(\tau,b))=\{\cO_{a^b}\}$. By Theorem \ref{thm:transfer}, it is enough to prove Eulerianity for the Whittaker coefficient corresponding to this orbit, {\it i.e.} the coefficient $\cF_{H,\varphi}$, where $H$ is the diagonal matrix $H=\diag(n-1,n-3,\dots, 3-n,1-n)$, and $\varphi$ is given by the trace form pairing with the matrix in lower-triangular Jordan form, with $b$ blocks of size $a$ each.

\begin{proposition}\label{prop:GL}
$\cF_{H,\varphi}$ is Eulerian on $\Delta(\tau,b)$.
\end{proposition}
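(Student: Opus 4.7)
The plan is to combine Theorem~\ref{thm:transfer} with the M\oe glin--Waldspurger realization of $\Delta(\tau,b)$ as a residue of an Eisenstein series \cite{MogWalds}, and then to unfold in the spirit of \cite{Ginz} and \cite{JL_GL}.

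First, by Theorem~\ref{thm:transfer} and the fact that $\WS(\Delta(\tau,b)) = \{\cO_{a^b}\}$, it suffices to establish Eulerianity for any single Fourier--Jacobi coefficient attached to~$\cO_{a^b}$. I would replace the Whittaker pair $(H,\varphi)$ of the proposition by the block-principal neutral pair $(H',\varphi')$, where $H'$ is block-diagonal with $b$ copies of $\diag(a-1,a-3,\ldots,1-a)$ and $\varphi'$ pairs via the trace form with the nilpotent matrix that has $b$ standard lower-triangular Jordan blocks of size~$a$ arranged along the diagonal. For this pair $N_{H',\varphi'}$ equals the standard maximal upper-unipotent subgroup $N$ of $\GL_n$, and $\chi_{\varphi'}$ is generic on the simple roots inside each of the $b$ diagonal $\GL_a$-blocks but trivial on the simple roots $\alpha_{ka}$, $k=1,\ldots,b-1$, between consecutive blocks.

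Next, I would realize $\eta\in\Delta(\tau,b)$ as an iterated residue of the Eisenstein series on $\GL_n(\A)$ induced from $|\det|^{s_1}\tau \otimes \cdots \otimes |\det|^{s_b}\tau$ on the standard parabolic $P$ of type $(\GL_a)^b$, and unfold $\cF_{H',\varphi'}[\eta]$ against this Eisenstein realization. The orbit analysis on $P(\K)\bs \GL_n(\K)/N(\K)$ relative to the stabilizer of $\chi_{\varphi'}$ collapses to the unique open double coset, because any non-open coset contributes an inner integral over a nontrivial unipotent subgroup of some $\GL_a$-factor against a character whose restriction is degenerate in the sense of pairing with a non-generic direction, hence vanishes by cuspidality of~$\tau$. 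The surviving contribution is a finite-dimensional abelian unipotent integral of a product of the $b$ generic Whittaker coefficients of~$\tau$, evaluated at matrix arguments depending on the outer variable~$g$ and the integration variable.

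Since the generic Whittaker coefficient of a cuspidal automorphic representation of $\GL_a(\A)$ is Eulerian by the classical uniqueness of local Whittaker models, and the residual unipotent integration factorizes place-by-place, this exhibits $\cF_{H',\varphi'}[\eta]$ as an Euler product; Theorem~\ref{thm:transfer} then propagates Eulerianity back to $\cF_{H,\varphi}[\eta]$. The main obstacle is the unfolding step: one must justify the interchange of the Fourier integral, the Eisenstein summation, and the residue in~$s$, and carry out the precise double-coset orbit analysis for the degenerate character $\chi_{\varphi'}$. This computation is essentially the one underlying \cite[Proposition~5.3]{Ginz} and is spelled out for top Fourier coefficients of Speh representations in \cite{JL_GL}; the task here is to assemble those ingredients so that the manifest Euler-product structure can be read off.
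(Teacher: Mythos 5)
Your reduction via Theorem~\ref{thm:transfer} is exactly the right first step, but there is a concrete error in the second paragraph: for the \emph{neutral} pair $(H',\varphi')$ you describe, $N_{H',\varphi'}$ is \emph{not} the full upper-triangular unipotent $N$ of $\GL_n$ when $b>1$. With $H'$ block-diagonal, $\ad(H')$ has eigenvalue $0$ on many upper-triangular matrix entries across different blocks (e.g.\ $E_{1,a+1}$), so $\mathfrak{g}^{H'}_{\geq 1}$ is a proper Lie subalgebra of $\mathfrak{n}$; indeed $\dim N_{H',\varphi'}=\tfrac12\dim\mathcal{O}_{a^b}$, which is strictly less than $\dim N$ for $b>1$. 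The pair that actually gives $N_{H,\varphi}=N$ with the degenerate character you describe is the \emph{non-neutral} pair $(H,\varphi)$ of the proposition itself, with $H=\diag(n-1,\dots,1-n)$ regular. So the intended replacement does not simplify the domain of integration to $N$.

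Beyond that, your overall route is genuinely different from the paper's and is considerably harder to make rigorous. You propose realizing $\Delta(\tau,b)$ as an iterated residue of an Eisenstein series from $P=(\GL_a)^b$, unfolding the Whittaker integral against the Eisenstein sum, carrying out the double-coset orbit analysis for a degenerate character, and then commuting all of this past the residue operator. As you acknowledge, these interchanges (Fourier integral, Eisenstein summation, residue in $s$) and the precise cuspidality-vanishing argument for non-open cosets are exactly the delicate points; this is essentially re-proving parts of Ginzburg's and Jiang--Liu's computations, and the proposal leaves them unresolved. The paper instead argues by \emph{induction on $b$}: factor $\mathcal{F}_{H,\varphi}=\mathcal{F}^L_{H,\varphi}\circ c_U$ through the constant term along the maximal parabolic with blocks $((b-1)a,a)$, invoke Offen--Sayag to identify $c_U(\Delta(\tau,b))\cong\Delta(\tau,b-1)\boxtimes\tau$ as an $L$-representation, observe that the degenerate Whittaker coefficient on $L$ splits as a tensor product of the two analogous coefficients, and close the induction with the $b=1$ case (cuspidal, regular $\varphi$, local multiplicity-one). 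This sidesteps the unfolding and residue issues entirely and is what makes the paper's proof short. If you wish to salvage your approach, you should (i) either work directly with the pair $(H,\varphi)$ of the statement rather than a neutral pair, or correctly describe the isotropic Fourier--Jacobi coefficient for the neutral pair, and (ii) actually carry out (or precisely cite) the orbit analysis and justify the interchange with the residue.
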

\begin{proof}
We prove by induction on $b$. For $b=1$, $\Delta(\tau,b)$ is a cuspidal representation, $\varphi$ is a regular nilpotent element, and $\cF_{H,\varphi}$ is factorizable by the local uniqueness of (non-degenerate) Whittaker models \cite{GK:1975,Shalika}.
For $b>1$, let $Q=LU$ denote the standard (upper-triangular) maximal parabolic with blocks $(b-1)a,a$.  Then $\cF_{H,\varphi}=\cF^L_{H,\varphi}\circ c_U$, where $c_U$ denotes the constant term, and $\cF^L_{H,\varphi}$ denotes the Whittaker coefficient corresponding to $(H,\varphi)$ on the Levi subgroup $L=\GL_{(a-1)b}(\A)\times \GL_{b}(\A)$. By \cite[Lemma 2.4]{Offen-Sayag}, $c_U(\Delta(\tau,b))$ is isomorphic as an automorphic representation of $L$ to the exterior product $\Delta(\tau,b-1)\boxtimes\tau$. It is easy to see that $\cF^L_{H,\varphi}$ also decomposes as an exterior product $\cF_{H_1,\varphi_1}\boxtimes \cF_{H_2,\varphi_2}$ of analogous Fourier coefficients. Explicitly,
\begin{equation}
H_1=\diag(a(b-1)-1,\dots, 1-a(b-1)), \quad H_2=\diag(a-1,\dots, 1-a),
\end{equation}
$\varphi_1$ is given by the trace form pairing with the matrix in lower-triangular Jordan form with $b-1$ blocks of size $a$ each, and $\varphi$ is given by a single Jordan block of size $a$.
Since $\cF_{H_1,\varphi_1}$ on $\Delta(\tau,b-1)$ and $\cF_{H_1,\varphi_1}$ on $\tau$ are Eulerian by the induction hypothesis, we obtain that 
$\cF_{H,\varphi}$ is Eulerian on $\Delta(\tau,b)$.
\end{proof}

Theorem~\ref{thm:GL} follows now from \cite{{MogWalds}}, Proposition \ref{prop:GL} and Theorem \ref{thm:transfer}.

\section{Applications to small representations}
\label{sec:applications-to-small-reps}

\subsection{Minimal representations}
\label{sec:min-rep}

We will use the following notions of minimal irreducible smooth representations of algebraic reductive groups over local fields $F$ of characteristic zero, following \cite{KobayashiSavin}.
An irreducible admissible smooth representation of a real reductive group is minimal if
its annihilator ideal in the universal enveloping algebra $\mathcal{U}(\fg_{\C})$ is the Joseph ideal. An irreducible representation of a  $p$-adic reductive group is  minimal if its
character, viewed as a distribution around 0 on $\fg(F)$, is equal to
$\widehat{\mu_{O_{\min}}}
 + c \delta_0$,
where $\widehat{\mu_{O_{\min}}}$ is the Fourier transform of the invariant measure on a minimal $G$-orbit in $\fg^*(F)$, $c\in \C$ and $\delta_0$ is the $\delta$-function at 0.

\begin{proof}[Proof of Corollary \ref{cor:min-rep}]
We can assume that $\varphi$ lies in the minimal orbit $\cO_{\text{min}} \subset \fg^*$, since otherwise $\cF^I_{H,\varphi}$ vanishes on $\pi$.
The group
$\mathbf{G}$ has a maximal parabolic subgroup $\mathbf{P}_\text{ab} = \mathbf{L}_\text{ab} \mathbf{U}_\text{ab}$ with abelian unipotent radical $\mathbf{U}_\text{ab}$.
Let $S_\text{ab} \in \fg$ be the Cartan element satisfying $\alpha(S_\text{ab}) = 0$ for the simple roots $\alpha$ that define $\mathbf{L}_\text{ab}$ and $\alpha(S_\text{ab}) = 2$ for the remaining simple roots.
Then $S_\text{ab}$ defines $\mathbf{U}_\text{ab}$ by \eqref{eq:N-S-phi} independent of character and  $\cO_{\text{min}}$ intersects $(\fg^*)^{S_\text{ab}}_{-2}$.
By \cite{MillerSahi}, this intersection lies in a single orbit under $\mathbf{P}_\text{ab}(\C)$.
Let $\psi$ lie in this intersection. Let us show that $\cF_{S_\text{ab},\psi}$ is Eulerian on $\pi$.

By Theorem \ref{thm:IntTrans}, $\cF_{S_\text{ab},\psi}$ is not identically zero on $\pi$, while $\cF_{S_\text{ab},\psi'}$ is identically zero on $\pi$ for any $\psi'\in(\fg^*)^{S_\text{ab}}_{-2}$ which does not lie in the minimal orbit.   Thus, $\pi$ is of global rank~$1$ in the sense of \cite[\S 9]{KobayashiSavin}. By \cite[Theorem 9.4]{KobayashiSavin}, this implies that for every place $\nu$, the local component $\pi_{\nu}$ of $\pi$ is of local rank 1. Since $\pi_{\nu}$ is unitary and has rank 1, it also has rank 1 with respect to the Heisenberg parabolic subalgebra by \cite[\S 2]{LokeSavin2006}. Now, \cite[Theorem 8]{LokeSavin2006} implies that $\pi_{\nu}$ is minimal.

Let $\chi$ be the character of $\mathbf{U}_\text{ab}(\mathbb{A})$ given by $\psi$.
  By \cite[Propositions 7.2 and 8.3]{KobayashiSavin} and \cite[Theorems A and B]{HKM}, for every $\nu$ with minimal $\pi_{\nu}$ we have
  $$\dim \bigl((\pi_{\nu}^{\infty})^*\bigr)^{\mathbf{U}_\text{ab}(\mathbb{K}_{\nu}),\chi}=1$$
  This implies that $\cF_{S_\text{ab},\psi}$ is Eulerian on $\pi$.
By Theorem \ref{thm:transfer}, so is $\cF^I_{H,\varphi}$.
\end{proof}

\begin{remark}
For the theorem we rely on the results of \cite{KobayashiSavin} which restrict the choice of group $\mathbf{G}$. We expect the theorem to be true more generally for other groups of type ADE. For example, for $E_8$ where one only has a Heisenberg parabolic the results of~\cite{Pollack} on (limits of) quaternionic discrete series could be helpful.
For $E_6$ the reference~\cite{MoellersSchwarz} might also be helpful.

We note that for types $B_n$ and $C_n$ there are no minimal automorphic representations, since the minimal orbits are not special in these cases, while Whittaker supports for automorphic representations of split classical groups contain only special orbits by \cite{JLS}.
Furthermore, for split real reductive groups of type $B_n$, $n\geq 4$, the absence of minimal irreducible admissible representations, even for the universal cover, is proven in~\cite{VoganSingular}.
\end{remark}

\begin{remark}\label{rmk:KS}
In \cite{KobayashiSavin} uniqueness of minimal representations was proved for a certain extended class of $\mathbb{K}$-forms of groups. The allowed $\mathbb{K}$-forms can be characterized by having a parabolic subgroup with abelian unipotent defined over $\mathbb{K}$. Over $\mathbb{K}=\mathbb{Q}$, this includes all split forms but only the quasi-split forms ${\rm SU}_{n,n}$ and ${\rm O}_{n,n+2}$. In the former case, the corresponding parabolic has abelian nilpotent consisting of $(n\times n$)-matrices that meets the minimal and next-to-minimal nilpotent orbit of ${\rm SU}_{n,n}$. In the latter case, the parabolic is associated with node 1 of the Dynkin diagram and the abelian nilpotent is the $(2n)$-dimensional representation of ${\rm O}_{n-1,n+1}$ that meets the minimal orbit and the next-to-minimal orbit $(2A_1)'$.
\end{remark}

\subsection{Next-to-minimal representations}
\label{sec:Bessel}

In this section we will use the results of \S\ref{sec:thm} to deduce Eulerianity of a large class of maximal rank Fourier coefficients of automorphic forms on groups of type B and D whose Whittaker support includes an orbit that embeds into the complex orbit of partition $(31\ldots1)$.
For type D this corresponds to a next-to-minimal orbit with Bala--Carter label $(2A_1)'$, while for type B the orbit has Bala--Carter label $B_1$ with a representative in the root space for the short simple root.

We will now deduce Corollary~\ref{cor:EuAD} from Theorems~\ref{thm:transfer} and~\ref{thm:Triv}, as well as from a local statement that follows from uniqueness of Bessel models.

Let $f\in \fg$ be such that $\varphi$ is given by the Killing form pairing with $f$. By \cite[\S5.2 and \S5.6]{MillerSahi} (cf. \cite[Lemma A.1]{LeviDist}) there exists a maximal torus and a root system such that $f$ lies in the root space of the short root $\sum_{i=1}^n \alpha_i$ for $B_n$, or in a direct sum of the root spaces of the roots $\sum_{i=1}^{n-1}\alp_i$ and $\sum_{i=1}^{n-2}\alp_i+\alp_n$, for $D_n$, in the Bourbaki notation. Let $S_{\alpha_1}$ be the rational semisimple element in $\mathfrak{g}$ given by the values $\alpha_i(S_{\alpha_1}) = 2$ if $i = 1$ and otherwise $0$.
Then $(S_{\alpha_1},\varphi)$ is a Whittaker pair.

Then $S_{\alpha_1}$ defines a unipotent subgroup $\mathbf{U}$ which is the unipotent radical of the maximal parabolic subgroup $\mathbf{P} \subset \mathbf{G}$ associated to the root $\alpha_1$.
Then the centralizer of $S_{\alpha_1}$ is a Levi subgroup $\mathbf{L}\subset \mathbf{P}$, and $\varphi$ is given by an element of the first internal Chevalley module of $\mathbf{L}$. Let $\mathbf{M} \subset \mathbf{L}$ denote the centralizer of~$\varphi$ in~$[\mathbf{L},\mathbf{L}]$. Then $\mathbf{M}$ is split and simple, and thus generated by its unipotent elements.
We have $[\mathbf{L},\mathbf{L}]\cong \SO_{n-1,n}$, and $\mathbf{M}\cong\SO_{n-1,n-1}$ for type B
and $[\mathbf{L},\mathbf{L}]\cong \SO_{n-1,n-1}$, and $\mathbf{M}\cong \SO_{n-1,n-2}$ for type D.
Let $\K_{\nu}$ be a completion of $\K$, archimedean or not.
Let $\chi_{\nu}$ denote the character of $\mathbf{M}(\K_{\nu})\mathbf{U}(\K_{\nu})$ given by the corresponding factor of $\chi_\varphi$ on $\mathbf{U}(\K_{\nu})$ and trivial on $\mathbf{M}(\K_{\nu})$.

\begin{theorem}[{\cite[Cor.~15.3]{GGP}, \cite{DSZ}}]\label{thm:local-uniquness}
For any irreducible admissible representation $\pi_\nu$ of $\mathbf{G}(\K_{\nu})$, we have $$\dim \Hom_{\mathbf{ M}(\K_{\nu})\mathbf{U}(\K_{\nu})}(\pi_\nu,\chi_{\nu})\leq 1$$
\end{theorem}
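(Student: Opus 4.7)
The plan is to prove the multiplicity-one statement via the Gelfand--Kazhdan criterion applied to the \emph{Bessel pair} that the subgroup $\mathbf{M}\mathbf{U}$ with character $\chi_\nu$ defines. Dualizing, the inequality is equivalent to showing that the space of $(\mathbf{M}(\K_\nu)\mathbf{U}(\K_\nu),\chi_\nu)$-equivariant continuous functionals on the smooth admissible $\pi_\nu$ has dimension at most one. This in turn is a statement about the algebra of $(\mathbf{M}\mathbf{U}\times\mathbf{M}\mathbf{U},\chi_\nu\boxtimes\chi_\nu^{-1})$-equivariant distributions on $\mathbf{G}(\K_\nu)$: if every such distribution is invariant under an anti-involution $\sigma$ such that $\pi_\nu\circ\sigma\cong\pi_\nu^\vee$, then $\dim\Hom_{\mathbf{M}\mathbf{U}}(\pi_\nu,\chi_\nu)\cdot\dim\Hom_{\mathbf{M}\mathbf{U}}(\pi_\nu^\vee,\chi_\nu^{-1})\leq 1$, giving the result.

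First I would identify the groups concretely. For type $B_n$ the pair $([\mathbf{L},\mathbf{L}],\mathbf{M})\cong(\SO_{n-1,n},\SO_{n-1,n-1})$ and for type $D_n$ it is $(\SO_{n-1,n-1},\SO_{n-1,n-2})$; the character $\chi_\nu$ is non-degenerate on the quotient $\mathbf{U}/[\mathbf{U},\mathbf{U}]$ modulo its $\mathbf{M}$-radical, so $\mathbf{M}\mathbf{U}$ is precisely a classical Bessel subgroup. Next I would exhibit an involution $\sigma$ of $\mathbf{G}(\K_\nu)$ realized as $\sigma(g)=\theta\cdot g^{-1}\cdot\theta^{-1}$ for a suitable diagonal element $\theta\in\mathbf{G}(\K_\nu)$ chosen so that $\sigma$ preserves $\mathbf{M}\mathbf{U}$, fixes $\chi_\nu$ (this fixes $\theta$ up to the center of $\mathbf{M}$), and satisfies $\pi_\nu\circ\sigma\cong\pi_\nu^\vee$, the last property being standard for orthogonal groups (using the MVW-type involution).

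The core step would then be to verify the Gelfand--Kazhdan property: any $(\mathbf{M}\mathbf{U}\times\mathbf{M}\mathbf{U},\chi_\nu\boxtimes\chi_\nu^{-1})$-equivariant distribution $\xi$ on $\mathbf{G}(\K_\nu)$ satisfies $\sigma_*\xi=\xi$. By the Aizenbud--Gourevitch--Rallis--Schiffmann strategy I would stratify $\mathbf{G}(\K_\nu)$ by $\mathbf{M}\mathbf{U}$-double cosets (or rather by the orbits of $\mathbf{M}\mathbf{U}\times\mathbf{M}\mathbf{U}$ acting by left--right multiplication), apply Bernstein's localization principle to reduce to each stratum, and use Luna's slice theorem / Harish-Chandra descent to replace each orbit by a problem on its normal bundle. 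On the open stratum the equivariance forces the distribution to be a pull-back of a scalar, which is trivially $\sigma$-invariant; on each lower-dimensional stratum one must rule out distributional contributions supported there.

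The principal obstacle is precisely this control of non-open orbits, and it is where the non-archimedean and archimedean arguments diverge substantially. For non-archimedean $\K_\nu$ (the GGP case) the required vanishing follows from a combinatorial orbit-count together with Bernstein's theory of $\ell$-sheaves, since there is no notion of wavefront set to control. For archimedean $\K_\nu$ (the DSZ case) distributions can have non-trivial singular support, and one must invoke holonomicity of the equivariant distribution in question, combined with the integrality of the wavefront set and Sun--Zhu's transitivity-of-wavefront-set argument, to exclude distributions concentrated on non-generic orbits. This archimedean analysis is by far the deepest ingredient, and is where I would expect the bulk of the technical work to lie.
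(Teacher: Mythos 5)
The paper does not prove this statement: it is imported wholesale from the cited references, and the only text accompanying it is the remark that GGP and DSZ in fact establish the stronger result (uniqueness of Bessel models allowing an arbitrary irreducible $\mathbf{M}(\K_\nu)$-representation in place of the trivial one). So there is no in-paper proof to compare yours against; what can be assessed is how well your sketch tracks the arguments of the cited sources.

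Your sketch captures the right ingredients at a high level, but it collapses two logically distinct steps in the cited proofs into one. You propose to apply the Gelfand--Kazhdan criterion directly to the pair $(\mathbf{G}(\K_\nu),\,\mathbf{M}(\K_\nu)\mathbf{U}(\K_\nu))$ with the character $\chi_\nu$, stratify by $\mathbf{M}\mathbf{U}\times\mathbf{M}\mathbf{U}$-orbits on $\mathbf{G}(\K_\nu)$, and control the lower strata by Bernstein localization/Harish-Chandra descent plus wavefront-set input at the archimedean place. That is a faithful description of the \emph{basic, corank-one, spherical} case (Aizenbud--Gourevitch--Rallis--Schiffmann non-archimedean, Sun--Zhu archimedean), where the subgroup is a codimension-one orthogonal subgroup and there is no unipotent or character twist to contend with. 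What GGP (Cor.~15.3) and DSZ actually do is \emph{reduce} the general Bessel multiplicity-one statement --- with its non-reductive subgroup $\mathbf{M}\mathbf{U}$ and the non-trivial character $\chi_\nu$ on $\mathbf{U}$ --- to that basic spherical case, by an inductive argument that peels off the unipotent radical one layer at a time and identifies the resulting Hom space with one for a smaller Bessel datum of lower corank. The Gelfand--Kazhdan/distributional machinery is then applied only once, at the bottom of the induction, to the reductive spherical pair. Running the orbit stratification and invariant-distribution argument on the Bessel pair itself, as you propose, is not what the cited papers do, and without the reduction it would require a substantially more delicate orbit analysis (the left--right $\mathbf{M}\mathbf{U}$-orbit space on $\mathbf{G}$ is much worse-behaved than that for a reductive spherical pair, and the nontrivial character $\chi_\nu$ on $\mathbf{U}$ would have to be carried through the descent and the archimedean wavefront analysis). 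Your proposal is therefore in the spirit of the cited work but omits its structuring reduction step; if you want a self-contained write-up you should insert the GGP/DSZ reduction from corank~$k$ to corank~$1$ before invoking the Gelfand--Kazhdan involution trick.
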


In fact, \cite{GGP,DSZ} prove a much stronger statement, that allows to tensor $\chi_{\nu}$ with any irreducible smooth admissible representation of $\mathbf{M}(\mathbb{K}_\nu)$.
 This stronger statement is called \emph{uniqueness of Bessel models}, but we will  not need it.

\begin{proof}[Proof of Corollary~\ref{cor:EuAD}]
  Let $\pi=\bigotimes_{\nu}\pi_{\nu}$ be the decomposition of $\pi$ into local products. By Theorem \ref{thm:Triv}, the restriction of $\cF_{S_{\alpha_1},\varphi}$ to $\pi$ is $\mathbf{M}(\K_{\nu})$-invariant. Thus, $\cF_{S_{\alpha_1},\varphi}$ induces for every $\nu$ an $\mathbf{M}(\K_{\nu})$-invariant functional on $\pi_{\nu}$. It is also of course $\mathbf{U}(\K_{\nu}),\chi$-equivariant. By Theorem \ref{thm:local-uniquness}, the space of such functionals is one-dimensional. Thus, $\cF_{S_{\alpha_1},\varphi}$ is Eulerian on $\pi$. By Theorem \ref{thm:transfer}, this implies that any Fourier--Jacobi coefficient $\mathcal{F}^I_{H, \varphi}$ is Eulerian on $\pi$.
\end{proof}

\section{Eisenstein series}
\label{sec:Eisenstein}

In this section, we study a special class of automorphic forms on $G = \mathbf{G}(\mathbb{A})$, namely Langlands--Eisenstein series $E(\lambda, g)$, depending on a complex parameter $\lambda$.  For special values of $\lambda$ such Eisenstein series belong to minimal or next-to-minimal representations of~$G$. We restrict to $\mathbb{K}=\mathbb{Q}$ throughout this section and let $\mathbb{A} = \mathbb{A}_\mathbb{Q}$.

We shall prove that Fourier--Jacobi coefficients of these Eisenstein series are Eulerian, thus proving Theorem~\ref{thm:ntm-Eisenstein}.
Using Theorem~\ref{thm:transfer}, this proceeds via a transfer of Eulerianity from degenerate Whittaker coefficients which are in turn shown to be Eulerian by computation using the reduction formula of Theorem~\ref{thm:reduction} below. Our results hold for the specific realizations of minimal and next-to-minimal representations given in Table~\ref{tab:SmallReps}.

\subsection{Eisenstein series and reduction formula for Whittaker coefficients}
Before proving all the next-to-minimal cases for ADE type of Table~\ref{tab:SmallReps}, we first have to introduce some notation for Eisenstein series, following \cite{FGKP}. We then recall the reduction theorem from \cite{FKP,FGKP} which is later used to show Eulerianity of Whittaker coefficients.

Let $\mathbf{G}$ be a semisimple algebraic group split and defined over $\mathbb{Q}$, and fix a Borel subgroup $\mathbf{B} = \mathbf{N} \mathbf{A}$ where $\mathbf{A}$ is a maximal torus and $\mathbf{N}$ is the unipotent radical.
Fix a maximal compact subgroup $K_\mathbb{A}$ of $\mathbf{G}(\mathbb{A})$ as in \cite[I.1.4]{MW95} such that
\begin{equation}
  \label{eq:Iwasawa}
  \mathbf{G}(\mathbb{A}) = \mathbf{N}(\mathbb{A}) \mathbf{A}(\mathbb{A}) K_\mathbb{A}
\end{equation}
and $K_\mathbb{A} = \prod_\nu K_\nu$ where $K_\nu$ is a maximal compact subgroup of $\mathbf{G}(\mathbb{Q}_\nu)$ and equals $\mathbf{G}(\mathfrak{o}_\nu)$ at almost all finite places which are in bijection with the rational prime numbers. The only infinite place for $\mathbb{K}=\mathbb{Q}$ is the archimedean real place $\nu=\infty$ and $\mathbf{G}(\mathbb{Q}_\infty)= \mathbf{G}(\mathbb{R})$ is a split real group of ADE type.

Let $X^*(\mathbf{A}) = \Hom(\mathbf{A}, \mathbb{G}_m)$ denote the lattice of rational characters of $\mathbf{A}$, where $\mathbb{G}_m$ is the multiplicative group, and let $\mathfrak{h}^* = X^*(\mathbf{A}) \otimes \mathbb{R}$ with vector space dual $\mathfrak{h}$.
Using the Iwasawa decomposition \eqref{eq:Iwasawa}, we define a logarithm map $H : \mathbf{G}(\mathbb{A}) \to \mathfrak{h}$ which is left-invariant under $\mathbf{N}(\mathbb{A})$, right-invariant under $K_\mathbb{A}$ and determined by its values on $\mathbf{A}(\mathbb{A})$ by
\begin{equation}
  e^{\langle \lambda| H(a) \rangle} = \abs{\lambda(a)}_\mathbb{A} \qquad \text{for all } \lambda \in \mathfrak{h}^*
\end{equation}
where $\langle \cdot | \cdot \rangle$ is the dual pairing and $\abs{\cdot}_\mathbb{A}$ is the adelic norm.
Let $\mathfrak{h}^*_\mathbb{C} = \mathfrak{h}^* \otimes_\reals \mathbb{C}$ and let $\rho \in \mathfrak{h}^*_\mathbb{C}$ denote the Weyl vector.
For each weight $\lambda \in \mathfrak{h}^*_\mathbb{C}$ we define a quasi-character $\tau_\lambda : \mathbf{B}(\mathbb{A})\to \mathbb{C}^{\times}$
by
\label{eq:chiB}
\begin{equation}
  \tau_\lambda(b) = e^{\langle \lambda + \rho |H(b) \rangle}.
\end{equation}
Note that $\tau_\lambda$ is Eulerian.

Consider now the principal series of smooth functions on $\mathbf{G}(\mathbb{A})$ defined by
\begin{equation}
\label{eq:indB}
I_{\mathbf{B}}(\lambda) \coloneqq \{f\, :\, \mathbf{G}(\mathbb{A})\to \mathbb{C}\, |\, f(bg)=\tau_\lambda(b)f(g), b\in \mathbf{B}(\mathbb{A})\}.
\end{equation}
The theory of Eisenstein series provides an automorphic realization of the principal series, parametrized by a choice of weight $\lambda$. To each $\lambda\in \mathfrak{h}_\mathbb{C}^*$ we define the following function on~$\mathbf{G}(\mathbb{A})$:
\begin{equation}
\label{eq:Eisdef}
E(\lambda, g)=\sum_{\gamma \in \mathbf{B}(\mathbb{Q})\backslash \mathbf{G}(\mathbb{Q})} e^{\left<\lambda + \rho |H(\gamma g)\right>}.
\end{equation}
This is invariant under the left action of $\mathbf{G}(\mathbb{Q})$ on $g$, while being right-invariant under $K_\mathbb{A}$. We will therefore refer to this as an \emph{Eisenstein series} which is the spherical vector of the automorphic realization of the induced representation~\eqref{eq:indB}.

For spherical Eisenstein series on split groups $\mathbf{G}$, the Whittaker coefficient~\eqref{WhittakerIntegral} with respect to a \textit{generic} character $\psi_N$ simplifies to the Eulerian integral
\begin{equation}
\label{eq:Jacquet}
\mathcal{W}_{\psi_N}[\lambda](g) \coloneqq \int_{\mathbf{N}(\mathbb{A})} \tau_{\lambda} (w_0ng)\psi_N(n)^{-1} dn.
\end{equation}
where $w_0$ is a representative of the longest word in the Weyl group $W$ of $\mathbf{G}$ and we have replaced $E(\lambda, g)$ in the functional argument by $\lambda$.
The local factors at finite places are called Jacquet integrals and can be computed by the Casselman--Shalika formula \cite{CasselmanShalika} related to characters of highest weight representations of the Langlands dual group.
For brevity let us henceforth denote the characters on $\mathbf{N}(\mathbb{A})$ simply by $\psi$. Note that for generic $\psi$ the integral~\eqref{eq:Jacquet} is only non-zero for generic $\lambda$.

When $\psi$ is degenerate the associated degenerate Whittaker coefficient is typically not Eulerian, unless one also chooses non-generic weights $\lambda$.
We first recall a reduction theorem for Whittaker coefficients for simply-laced groups $\mathbf{G}$ in the case of degenerate $\psi$~\cite{FKP}.  To this end we introduce some more notation.
Denote by $\Pi$ the set of simple roots of $\mathfrak{g}$.
As in \S\ref{sec:intro}, we choose a pinning of $\mathbf{G}$ from which we obtain a one-parameter subgroup $x_\alpha : \mathbb{G}_a \to \mathbf{G}$ for each simple root $\alpha$. Let $\chi$ be a fixed and unramified character on $\mathbb{A}$ trivial on $\mathbb{Q}$.
We may then represent the value of $\psi$ on any $n\in \mathbf{N}(\mathbb{A})$ as follows
\begin{equation}
  \label{eq:psi_N}
  \psi(n)=\psi\Biggl(\prod_{\alpha\in \Pi}x_\alpha(u_\alpha) n'\Biggr)=\chi\Biggl(\sum_{\alpha\in \Pi}m_\alpha u_\alpha\Biggr), \qquad m_\alpha\in \mathbb{Q}, u_\alpha \in \mathbb{A}
\end{equation}
where $n'\in [\mathbf{N},\mathbf{N}](\mathbb{A})$ and the value of $\psi$ does not depend on it.
We call the set of simple roots for which $m_\alpha\neq 0$ the \emph{support} of $\psi$ and denote it by $\text{supp}(\psi)$.
The character is generic if $\text{supp}(\psi)=\Pi$ and otherwise it is degenerate. If $\psi$ has support only on $k$ orthogonal simple roots we will say that it is \emph{degenerate of type} $k A_1$.

A degenerate character $\psi$ canonically defines a semi-simple proper subgroup $\mathbf{G}^{\prime}\subset \mathbf{G}$, such that the set of simple roots $\Pi^{\prime}$ of $\mathbf{G}^{\prime}$ equals the support of $\psi$. The Dynkin diagram of $\mathbf{G}^{\prime}$ is the subdiagram obtained from that of $\mathbf{G}$ by restricting to the nodes of $\Pi^{\prime}$. Let ${W}^{\prime}$ be the Weyl group of $\mathbf{G}^{\prime}$ with longest Weyl word $w_0^{\prime}$.
We then have the following:

\begin{theorem}\cite{FKP}
  \label{thm:reduction}
  Let $\mathbf{G}$ be a simple, simply-laced and split Lie group, defined over $\mathbb{Q}$ with a fixed maximal unipotent subgroup $\mathbf{N}$ and associated set of simple roots $\Pi$.
  Let $\psi$ be a character on $\mathbf{N}(\mathbb{A})$ and let $\mathbf{G}'$ be the subgroup of $\mathbf{G}$ determined by the set of simple roots $\Pi' \coloneqq \operatorname{supp}(\psi)$ as described above.
  Let $E(\lambda, g)$ be a spherical Eisenstein series on $\mathbf{G}(\mathbb{A})$ as defined in \eqref{eq:Eisdef}.
  Then the degenerate Whittaker coefficients $W_\psi^{\mathbf{G}}[\lambda](\id)$ of $E(\lambda, g)$ at the identity are determined by
\begin{align}
\label{eq:degW}
\mathcal{W}^\mathbf{G}_\psi[\lambda](\id) = \sum_{w_c w'_{\mathrm{long}}\in {W}/{W}'}
M(w_c^{-1},\lambda) \mathcal{W}^{\mathbf{G}'}_{\psi}[w_c^{-1}\lambda](\id)\,.
\end{align}
Here, $w_c$ is a coset representative of minimal length and can be constructed explicitly by Weyl orbit methods, and $\mathcal{W}_{\psi}^{\mathbf{G}'}[w_c^{-1}\lambda](\id)$ denotes a generic Whittaker coefficient of an Eisenstein series on the subgroup $\mathbf{G}'$ evaluated at the identity.
\end{theorem}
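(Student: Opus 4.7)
The plan is to derive the formula by a Bruhat-cell unfolding of the Eisenstein series followed by a careful resummation of the resulting Weyl group sum according to cosets in $W/W'$, in which each coset's contribution separates into an intertwining factor times a generic Whittaker coefficient on $\mathbf{G}'$.

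First, I would substitute the definition~\eqref{eq:Eisdef} into the Jacquet integral~\eqref{eq:Jacquet} and unfold the lattice sum $\mathbf{B}(\mathbb{Q})\backslash\mathbf{G}(\mathbb{Q})$ against $\int_{\mathbf{N}(\mathbb{A})}$ using the Bruhat decomposition $\mathbf{G}(\mathbb{Q})=\bigsqcup_{w \in W}\mathbf{B}(\mathbb{Q})\,w\,\mathbf{B}(\mathbb{Q})$. After the standard manipulation that converts each cell into an integration over the relevant piece of $\mathbf{N}(\mathbb{A})$ and extending to all of $\mathbf{N}(\mathbb{A})$, one obtains the Langlands-type expansion
$$\mathcal{W}^{\mathbf{G}}_{\psi}[\lambda](\id)=\sum_{w \in W}\int_{\mathbf{N}(\mathbb{A})}\tau_{\lambda}(wn)\,\psi(n)^{-1}\,dn,$$
where for any $w$ whose inverse does not send $\operatorname{supp}(\psi)=\Pi'$ to positive roots, the integrand is constant along a unipotent direction against which $\psi$ is non-trivial, forcing that term to vanish.

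Next, I would write each surviving $w$ as $w = w_c w'$ with $w_c$ the minimal-length representative of its coset in $W/W'$ and $w' \in W'$; the surviving cosets are precisely those indexed by $w_c w'_{\mathrm{long}}$ as in the statement, since pairing $w_c$ with the longest element of $W'$ produces the element of the coset that sends every root in $\Pi'$ to a negative root, mirroring the role of $w_0$ in a generic Jacquet integral on $\mathbf{G}'$. Decomposing $\mathbf{N}=\mathbf{N}'\cdot\mathbf{N}''$ into the unipotent radical $\mathbf{N}'$ of the Borel of $\mathbf{G}'$ and the complementary root subgroup $\mathbf{N}''$, and using that $\psi$ is trivial on $\mathbf{N}''(\mathbb{A})$, the inner integral factors as
$$\Bigl[\int_{\mathbf{N}''(\mathbb{A})}\tau_{\lambda}(w_c n'')\,dn''\Bigr]\cdot\Bigl[\int_{\mathbf{N}'(\mathbb{A})}\tau_{w_c^{-1}\lambda}(w' n')\,\psi(n')^{-1}\,dn'\Bigr]$$
after absorbing the conjugation by $w_c$ onto the $\tau_\lambda$-factor via the Iwasawa logarithm. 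The first bracket is by definition the intertwining kernel $M(w_c^{-1},\lambda)$, expressible as a product of Gindikin--Karpelevich factors over the positive roots (outside the root system of $\mathbf{G}'$) sent to negative roots by $w_c^{-1}$; summing the second bracket over $w' \in W'$ and reversing the Bruhat unfolding on $\mathbf{G}'$ reassembles it into the generic Whittaker coefficient $\mathcal{W}^{\mathbf{G}'}_{\psi}[w_c^{-1}\lambda](\id)$ of a spherical Eisenstein series on the subgroup $\mathbf{G}'$ at parameter $w_c^{-1}\lambda$.

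The main obstacle is the clean root-by-root factorization of the $\mathbf{N}(\mathbb{A})$ integral together with the verification that the intertwining factor truly depends only on $w_c$ and not on the inner $w'$. This rests on tracking how conjugation by elements of $W'$ permutes the root subgroups of $\mathbf{N}''$ without leaking into $\mathbf{N}'$ once $w_c$ has been chosen as the minimal coset representative, and on invoking the simply-laced hypothesis to avoid sign and normalization ambiguities in the Chevalley commutation relations among root subgroups straddling $\Pi'$ and its complement. Convergence and the passage from the region of absolute convergence to generic $\lambda \in \mathfrak{h}^*_{\mathbb{C}}$ are then handled by the standard meromorphic continuation of intertwining operators together with the Casselman--Shalika formula applied to each factor on $\mathbf{G}'$.
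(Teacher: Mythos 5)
Your overall strategy---Bruhat-cell unfolding of the Eisenstein series inside the Jacquet integral, identification of which cells survive the degenerate character, grouping by cosets in $W/W'$, and factorization of each term into a Gindikin--Karpelevich piece and a generic Jacquet integral on $\mathbf{G}'$---is indeed the approach behind this formula in \cite{FKP}. However, as written the argument contains concrete errors at each of the three substantive steps, which must be fixed before it can be called a proof.

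First, the unfolded expansion is not $\sum_{w\in W}\int_{\mathbf{N}(\mathbb{A})}\tau_\lambda(wn)\psi(n)^{-1}dn$. After collapsing $\mathbf{B}(\mathbb{Q})\backslash\mathbf{B}(\mathbb{Q})w\mathbf{B}(\mathbb{Q})$ against the $[N]$-integral, each $w$-term is an integral over $N_w(\mathbb{A})$ (dimension $\ell(w)$, roots $\{\alpha>0:w\alpha<0\}$), multiplied by $\int_{[N^w]}\psi^{-1}$ (with $N^w$ having roots $\{\alpha>0:w\alpha>0\}$); "extending to all of $\mathbf{N}(\mathbb{A})$" is not available because the integrand is constant along $N^w$ and these directions are non-compact. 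Second, the survival criterion is stated in terms of $w^{-1}$ but must be in terms of $w$: the $w$-term survives exactly when $\psi|_{N^w}\equiv 1$, i.e.\ $w\alpha<0$ for all $\alpha\in\Pi'$. This is not equivalent to "$w^{-1}(\Pi')\subset\Phi^+$". For $\mathbf{G}=\SL_3$, $\Pi'=\{\alpha_1\}$, $w=s_1$ one has $w\alpha_1=-\alpha_1<0$, so $s_1$ must contribute (it is $w_cw'_{\mathrm{long}}$ for the identity coset), yet $w^{-1}\alpha_1=-\alpha_1<0$, so your criterion discards it. The correct criterion selects precisely one element per coset, namely $w_cw'_{\mathrm{long}}$: a minimal-length representative $w_c$ carries $\Phi'^+$ into $\Phi^+$, and $w'_{\mathrm{long}}$ carries $\Pi'$ into $-\Pi'$.

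Third, the factorization is dimensionally inconsistent. Your first bracket integrates over all of $\mathbf{N}''(\mathbb{A})$, which has dimension $|\Phi^+|-|\Phi'^+|$, but $M(w_c^{-1},\lambda)$ is a Gindikin--Karpelevich product of only $\ell(w_c)$ factors and arises from an $\ell(w_c)$-dimensional integral. (For the coset of the identity, $w_c=e$ and $M(e,\lambda)=1$, whereas $\int_{\mathbf{N}''(\mathbb{A})}\tau_\lambda(n'')dn''$ is a divergent high-dimensional integral.) The correct decomposition starts from $\ell(w_cw'_{\mathrm{long}})=\ell(w_c)+\ell(w'_{\mathrm{long}})$, which splits the inversion set of $w_cw'_{\mathrm{long}}$ as $\Phi'^+\sqcup (w'_{\mathrm{long}})^{-1}\Phi^-_{w_c}$ and hence $N_{w_cw'_{\mathrm{long}}}$ as $N'\cdot(w'_{\mathrm{long}})^{-1}N_{w_c}w'_{\mathrm{long}}$. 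One then conjugates the second factor by $w'_{\mathrm{long}}$, performs Iwasawa on $\mathbf{G}'$, and verifies that what isolates is $M(w_c^{-1},\lambda)$ (depending only on $w_c$) times the generic Jacquet integral on $\mathbf{G}'$ at weight $w_c^{-1}\lambda$. This bookkeeping is the real content of the theorem, and the proposal defers all of it.
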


A similar formula for general arguments $g \in \mathbf{A}(\mathbb{A})$ can be found in~\cite{FGKP}.
The fact that we restrict to the identity $g = \id$ of $G$ to make later expressions shorter will not affect the argument for Eulerianity.
This is because we will show that, for the particular representations and coefficients we consider, there will be only one term in the reduction formula\eqref{eq:degW}.
Since each $\mathcal{W}^{\mathbf{G}'}_\psi$ in~\eqref{eq:degW} is a generic Whittaker coefficent, it is known to be Eulerian for general $g$ as described in \S\ref{sec:intro}.

\begin{remark}
The sum in the theorem above is over very specific coset representatives $w_c$.
The intertwiner $M(w_c^{-1},\lambda)$ is given by a quotient of completed Riemann zeta functions $\xi(s) = \pi^{-s/2} \Gamma(s/2) \zeta(s)$ through
\begin{align}
  \label{eq:intertwiner}
M(w_c^{-1},\lambda) = \prod_{\substack{\alpha>0\\w_c^{-1}\alpha<0}} \frac{\xi(\langle \alpha| \lambda\rangle)}{\xi(\langle \alpha| \lambda\rangle+1)}
\end{align}
where $\langle \cdot | \cdot \rangle$ here is the Killing form, normalized such that long roots satisfy $\langle \alpha | \alpha \rangle =2$.
\end{remark}

\begin{remark}
  If $\lambda$ corresponds to a quasi-character $\tau_\lambda$ on a standard parabolic subgroup $\widetilde{\mathbf{P}}=\widetilde{\mathbf{L}}\widetilde{\mathbf{U}}$, the Eisenstein series~\eqref{eq:Eisdef} becomes a parabolic Eisenstein series in a degenerate principal series. In this case the Weyl coset sum in~\eqref{eq:degW} reduces further to a double coset of Weyl groups $\widetilde{{W}} \backslash {W} / {W}'$ where $\widetilde{{W}}$ is the Weyl group of the Levi subgroup $\widetilde{\mathbf{L}}$, see (10.81) in~\cite{FGKP}. These double cosets typically have very few elements and the number of summands can be further reduced at special points in the degenerate principal series. These additional simplifications can have two origins and both depend on the choice of $\lambda$. First, the intertwiner \eqref{eq:intertwiner} can vanish at special points for the character $\lambda$, e.g.\ when one is studying a small representation as a residue of a degenerate principal series. Second, the Whittaker coefficient $\mathcal{W}^{\mathbf{G}'}_{\psi}[w_c^{-1}\lambda](\id)$ can vanish when $w_c^{-1}\lambda$ does not represent a generic quasi-character for $\mathbf{G}'$ as one is then trying to compute a generic Whittaker coefficient of a non-generic automorphic representation and this has to vanish.
\end{remark}

\subsection{Minimal and next-to-minimal representations}

A representation in the space of automorphic forms is said to have Gelfand--Kirillov dimension $d$ if all the orbits in its Whittaker support have dimension $2d$. It is conjectured that all irreducible automorphic representations have a Gelfand--Kirillov dimension, i.e. all the orbits in the Whittaker support have the same dimension \cite[Conjecture 5.10]{Ginz}. For automorphic realizations of a degenerate principal series $I_{\mathbf{P}}(\lambda)$ similar to~\eqref{eq:Eisdef}, it is conjectured that for a generic value of the parameter $\lambda$  the Gelfand--Kirillov dimension equals the dimension of the unipotent subgroup $\mathbf{U}$ of the  parabolic subgroup $\mathbf{P}=\mathbf{L}\mathbf{U}$ they are induced from.

For the purposes of this section we shall restrict to {\em maximal} parabolic subgroup $\mathbf{P}_{i_*}$ where $i_*$ denotes the simple root of $\mathbf{G}$ that is missing from the root system of the Levi subgroup $\mathbf{L}_{i_*}$. The corresponding Eisenstein series of the automorphic realization of the degenerate principal series can be obtained by using the specific weight
\begin{equation}
\label{eq:lambdais}
\lambda_{i_*}\!(s)=2s\Lambda_{i_*}-\rho, \qquad s\in \mathbb{C}\,,
\end{equation}
where $\Lambda_{i_*}$ denotes a fundamental weight that satisfies $\langle \Lambda_{i_*} | \alpha_j\rangle = \delta_{i_*,j}$ with the simple roots $\alpha_j$.

\begin{table}[t]
  \centering
  \caption{Eisenstein series realizations of minimal and next-to-minimal representations. The notation $s_{i_*}$ here labels at the same time the maximal parabolic $\mathbf{P}_{i_*}$ and the value of the parameter $s$ appearing in~\eqref{eq:lambdais}. For type $\SO_{n,n}$ and $n\geq 5$ there are two distinct next-to-minimal representations that we realize automorphically.  The non-generic different choices give isomorphic automorphic representations due to Langlands's functional relations for Eisenstein series.
  }
  \label{tab:SmallReps}
  \begin{tabular}{ccc}
    \toprule
    Lie group & $\pi_\text{min}$ & $\pi_\text{ntm}$ \\
    \midrule
    $\SL_n$ & generic $s_1$ \textbf{or} generic $s_{n-1}$ & generic $s_2$ \textbf{or} generic $s_{n-2}$ \\[0.75em]
    $\SO_{n,n}$ & $s_1 = \tfrac{n-2}{2}$ \textbf{or} $s_n = 1$ \textbf{or} $s_{n-1}$ = 1 &
    $\begin{cases} \text{generic $s_1$} & (2A_1)'\\[0.1em] s_{n-1}=2\,\, \textbf{or}\,\, s_n=2 & (2A_1)''\end{cases}$      \\[1.5em]
    $\E_{6(6)}$ & $s_1 = \tfrac{3}{2}$ \textbf{or} $s_6 = \tfrac{3}{2}$ & generic $s_1$ \textbf{or}  generic $s_6$ \textbf{or} $s_5 = 1$ \\[0.5em]
    $\E_{7(7)}$ & $s_1 = \tfrac{3}{2}$ \textbf{or} $s_7 = 2$ & $s_1 = \frac{5}{2}$ \textbf{or} $s_6 = \tfrac{3}{2}$ \textbf{or} $s_7 = 4$ \\[0.5em]
    $\E_{8(8)}$ & $s_1 = \frac{3}{2}$ \textbf{or} $s_8 = \tfrac{5}{2}$ & $s_1 = \frac{5}{2}$ \textbf{or} $s_7 = 2$ \textbf{or} $s_8 = \tfrac{9}{2}$ \\[0.5em]
    \bottomrule
  \end{tabular}
\end{table}

The Gelfand--Kirillov dimension of the associated automorphic representation is conjectured in \cite[\S 5]{Ginz} to satisfy
\begin{align}
  \operatorname{GKdim}\big(I_{\mathbf{P}_{i_*}}(\lambda_{i_*}(s))\big) = \dim \mathbf{U}_{i_*}\quad \text{for all } s\in \mathbb{C}.
\end{align}

However, for specific choices of $s$ there can be  spherical submodules/quotients with smaller GK-dimension; this is one of the standard ways of realising minimal and next-to-minimal automorphic representations~\cite{GRS2,KazhdanSavin,GanSavin,GMV,FGKP}. In Table~\ref{tab:SmallReps}, we list particular ways of realizing minimal and next-to-minimal representations of groups of ADE-type based on these references. More specifically, the minimal and next-to-minimal cases for type $A_n$ as full inductions from maximal parabolics are well known and we verify them using the methods of this section below. 
The minimal cases of $D_n$, $E_6$, $E_7$ and $E_8$ in the Heisenberg parabolic were treated in~\cite{GRS2}, the next-to-minimal case for $E_6$, $E_7$ and $E_8$ in the parabolic $\mathbf{P}_1$ were given in~\cite{GMV} and the other realizations given in the table are related by functional relations of Eisenstein series. The next-to-minimal cases for $D_n$ can be determined using the methods of this section and we shall present the details for spinor nodes below where we also derive the Eulerianity of the next-to-minimal coefficients. We analyze also the exceptional cases using this method and derive the next-to-minimal Whittaker support of the Eisenstein series at $s_8=\frac92$ in detail.
The realizations in Table~\ref{tab:SmallReps} were chosen because they are of interest in string theory where the extremal nodes of the Dynkin diagram have an interpretation related to certain weak-coupling limits of the theory \cite{FGKP}.
At the values shown in the table,  minimal and next-to-minimal representations appear as subrepresentations.
The corresponding Gelfand--Kirillov dimensions are tabulated in Table~\ref{tab:GKdim}. Note that for type D there are two distinct next-to-minimal orbits of type $2A_1$ (see \S\ref{sec:Bessel}) and they are listed separately. We follow Bourbaki labelling conventions.

\begin{table}[t]
  \centering
  \caption{Gelfand--Kirillov dimensions for minimal and next-to-minimal representations described in Table~\ref{tab:SmallReps}. The Whittaker support of the corresponding Eisenstein series is shown to be given by the nilpotent orbits whose dimension is listed in the table.
  For $\SL_3$ there exists no next-to-minimal representation whose Whittaker support is a $2A_1$-orbit since there is no such orbit. For $\SO_{n,n}$ and $n\geq 5$ there are two $2A_1$ orbits that are listed separately.}
  \label{tab:GKdim}
  \begin{tabular}{ccl}
    \toprule
    Lie group & $\operatorname{GKdim}(\pi_\text{min})$ & $\operatorname{GKdim}(\pi_\text{ntm})$ \\
    \midrule
    $\SL_n$ & $n-1$ & $2n-4$ \\[0.25em]
    $\SO_{n,n}$ & $2n-3$ & $\hspace{-0.8em}\begin{cases} 2n-2 & (2A_1)' \\ 4n-10 & (2A_1)''\end{cases}$\\[1.25em]
    $\E_{6(6)}$ & $11$ & $16$ \\
    $\E_{7(7)}$ & $17$ & $26$ \\
    $\E_{8(8)}$ & $29$ & $46$ \\
    \bottomrule
  \end{tabular}
\end{table}

\subsection{Eulerianity for next-to-minimal representations of ADE-type}

In this section, we give explicit examples of degenerate Whittaker coefficients for the minimal and next-to-minimal representations of Table~\ref{tab:SmallReps}.

For convenience we will introduce the following labelling of the coefficients and their characters.
As shown in \eqref{eq:psi_N}, a character $\psi$ on $N$ can be parametrized by a tuple $[m_{\alpha_1}, \ldots, m_{\alpha_r}]$ with $m_{\alpha_i} \in \mathbb{Q}$.
We will denote the corresponding Whittaker coefficient of an Eisenstein series $E(\lambda, g)$ as $\mathcal{W}_{[m_{\alpha_1}, \ldots, m_{\alpha_r}]}(g)$ suppressing the dependence of $\lambda$.
We shall also evaluate all Whittaker coefficients at the identity $\id$ of $\mathbf{G}(\mathbb{A})$ to make the resulting expressions simpler.

For $m \in \mathbb{Z}$ and $s \in \mathbb{C}$ we shall also use the notation
\begin{align}
\label{eq:SL2W}
B_{m}(s) \coloneqq  \frac{2}{\xi(2s)} |m|^{s-1/2} \sigma_{1-2s}(m) K_{s-1/2} (2\pi |m|)
\end{align}
for the standard $\SL_2$ Whittaker coefficient evaluated at the identity with $\sigma_{1-2s}(m)$ being the divisor sum and $K_{s-1/2}$ the modified Bessel function of the second kind. We note that this function does not vanish unless $s=0$, corresponding to the value where the non-holomorphic $\SL_2$ Eisenstein series belongs to the trivial representation (and thus has vanishing Fourier coefficients), or when $s=1/2$, corresponding to the value where there is a zero in the degenerate principal series. Moreover, the expression~\eqref{eq:SL2W} is always Eulerian.

Similarly, we shall use the notation
\begin{align}
\label{eq:SL3W}
B_{m_1,m_2}(s_1,s_2)\coloneqq \frac{1}{\xi(2s_1)\xi(2s_2)\xi(2s_1+2s_2-1)} \tilde{K}_{s_1,s_2}(m_1,m_2)\,,
\end{align}
for the generic $\SL_3$ Whittaker coefficient evaluated at the identity. The function $\tilde{K}_{s_1,s_2}(m_1,m_2)$ is a special function that is given by a convolution of two $K$-Bessel functions~\cite{Bump:1984,Pioline:2009}. Its precise form does not matter to us here since it does not vanish for any value of $s_1$ and $s_2$. The vanishing is solely controlled by the prefactor. As~\eqref{eq:SL3W} corresponds to a generic $\SL_3$ Whittaker coefficient, it is Eulerian for generic (non-vanishing) $m_1$ and $m_2$.

Our focus will be mainly on type $A_1$ and type $2A_1$ Whittaker coefficients that are characterized by a single simple root or a pair of orthogonal simple roots, respectively. All single simple roots are Weyl-conjugate. According to~\cite[Cor.~3.0.3]{NTMSimplyLaced} all orthogonal pairs are conjugate for simple groups of types $A$ and $E$ while for $D_n$ with $n>4$ there are two inequivalent pairs, corresponding to the two types of $2A_1$ orbits and next-to-minimal representations listed in Table~\ref{tab:GKdim}. We shall only analyse one inequivalent pair of orthogonal for each next-to-minimal representation as other choices are conjugate.

\subsubsection{Type \texorpdfstring{$A_n$}{An}}

For type $A_n$, we consider first minimal representations. These come in a one-parameter family that corresponds to the degenerate principal series induced from $\mathbf{P}_1$ (or $\mathbf{P}_n$). To show Eulerianity of rank-one Fourier coefficients it suffices to compute a single Whittaker coefficient that we choose to be supported on the simple root $\alpha_1$. Applying~\eqref{eq:degW} we obtain for $\lambda = 2s \Lambda_1 -\rho$
\begin{align}
\mathcal{W}_{[m,0,0,\ldots,0]} (\id) =  B_m(s)
\end{align}
and is clearly Eulerian.  We have also checked that Whittaker coefficients supported on more than a single root vanish by using Theorem~\ref{thm:reduction}.

For the next-to-minimal representation given in Table~\ref{tab:SmallReps} and $n>2$ we consider the parabolic $\mathbf{P}_2$ according to Table~\ref{tab:SmallReps}, i.e. $\lambda=2s\Lambda_2-\rho$. Choosing the degenerate character to be supported on nodes $1$ and $3$, one finds
\begin{align}
\mathcal{W}_{[m,0,n,\ldots,0]} (\id) =  \frac{\xi(2s-1)}{\xi(2s)}B_m(s-\tfrac12)B_n(s-\tfrac12)\,,
\end{align}
which is an Eulerian expression. Eulerianity of maximal parabolic Fourier coefficients then follows from Theorem~\ref{thm:transfer} or from~\cite[Thm.~B(iii)]{Ahlen:2017agd}. Whittaker coefficients associated with larger orbits can be shown to vanish by using Theorem~\ref{thm:reduction}.

\subsubsection{Type \texorpdfstring{$D_n$}{Dn}} According to Table~\ref{tab:SmallReps}, the minimal representation of $D_n$ (for $n\geq 4$) can be realized in the degenerate principal series induced $I_{\mathbf{P}_1}$ for the value $s=1$. The corresponding Whittaker coefficient is determined from~\eqref{eq:degW} to be
\begin{align}
\mathcal{W}_{[m,0,0,\ldots,0]} (\id) =  B_m\left(\tfrac{n-2}{2}\right)
\end{align}
and is Eulerian. One can check that there are no non-zero Whittaker coefficients associated with larger orbits. 

Let us deduce that the representation $\pi$ generated by the Eisenstein series $E(\lambda_1(1),g)$, where we use the notation~\eqref{eq:lambdais},  is indeed minimal. Since $\pi$ is a quotient of the automorphic realization of the degenerate principal series $I_{\mathbf{P}_1}$, the local components are quotients of local degenerate principal series. 
Thus, every  orbit of the Whittaker support of $\pi$ lies in the closure of the Whittaker support of the local degenerate principal series for each place. The behavior of the local Whittaker support under induction is determined for $p$-adic places in \cite{MW}, and for degenerate principal series the Whittaker support lies in the Richardson orbit corresponding to the parabolic subgroup. 

The (complex) Richardson orbit of the parabolic $\mathbf{P}_1$ is given by the partition $(2^n)$ for even $n$ and by the partition $(2^{n-1}1^2)$ for odd $n$. Each complex orbit in the closure of this one is given by a partition of the form $(2^k1^{2(n-k)})$, and every such orbit includes a unique rational orbit. Moreover, all these rational orbits allow Whittaker coefficients. Lemma \ref{lem:neutral-dominates} and Theorem \ref{thm:IntTrans} imply now that for every orbit in the Whittaker support of the Eisenstein series $E(\lambda_1(1),g)$, the corresponding Whittaker coefficient does not vanish. Since $E(\lambda_1(1),g)$ has no non-zero Whittaker coefficients associated with non-zero non-minimal orbits, we deduce that the only orbit in its Whittaker support is the minimal orbit.

For $D_n$ ($n>4$), there are two different next-to-minimal representations to discuss from Table~\ref{tab:SmallReps} and we consider both in turn.

We begin with the degenerate principal series for $D_n$ associated with the maximal parabolic $P_1$ in Bourbaki enumeration. This is a next-to-minimal representation of Gelfand--Kirillov dimension $2n-2$, corresponding to the partition $(31^{n-3})$ and orbit $(2A_1)'$, for generic members of the degenerate principal series. Among the pairs of orthogonal simple roots, only the pair $(\alpha_{n-1},\alpha_n)$ of spinor nodes belongs to the orbit $(2A_1)'$~\cite{NTMSimplyLaced} and we shall now evaluate the associated degenerate Whittaker vector using formula~\eqref{eq:degW}. For this one needs to study the double cosets $W(A_1A_1)\backslash W(D_n) / W(D_{n-1})$ and there are $2n-3$ of them. Inspection shows that there is only one element with a non-vanishing Whittaker coefficient and this becomes Eulerian.

As an example, for $D_5$ and the Eisenstein series with weight $\lambda=2s\Lambda_1-\rho$ one obtains the Eulerian Whittaker coefficient (at the identity)
\begin{align}
\mathcal{W}_{[0,0,0,m,n]} (\id) =
\frac{\xi (2 s-4)^2 }{\xi (2 s) \xi (2 s-3)} B_{m}(\tfrac{5}{2}-s)B_n(\tfrac{5}{2}-s)\,,
\end{align}
while for $D_6$ the similar Whittaker coefficient is given by
\begin{align}
\mathcal{W}_{[0,0,0,0,m,n]} (\id) =
\frac{\xi (2 s-5)^2
}{\xi (2 s) \xi (2 s-4)}
B_m(3-s)B_n(3-s)\,.
\end{align}
The Eulerianity of these coefficients can be transferred to other Fourier coefficients using Theorem~\ref{thm:transfer}.

For $D_n$ and $n>4$ one can also generate next-to-minimal representations with Whittaker support given by the other next-to-minimal orbit $(2A_1)''$ from degenerate principal series associated with the `spinor' nodes $\alpha_n$ or $\alpha_{n-1}$. We shall choose $\alpha_n$ for concreteness in the following. The Whittaker support of generic members of this degenerate principal series of $D_n$ has Bala--Carter label $\lfloor \tfrac{n}{2}\rfloor A_1$
and thus exceeds the next-to-minimal type $2A_1$ for $n\geq 6$. Therefore one has to consider special members of the degenerate principal series, i.e., special values of $s$. Before addressing the general case, we illustrate the idea in the case of $D_6$.

We shall study the degenerate principal series $I_{\mathbf{P}_6}$ of $D_6$ and first look at generic values of $s$ where the representation is larger than next-to-minimal and then show how for the value $s=2$ of Table~\ref{tab:SmallReps} the automorphic representation reduces to a next-to-minimal representation.
For generic $s$, the Whittaker support is of Bala--Carter type $3A_1$ as can be seen by studying the degenerate Whittaker coefficients.
For an Eisenstein series in this degenerate principal series, a Whittaker coefficient of type $3A_1$ is then determined according to~\eqref{eq:degW} as
\begin{align}
\label{eq:3A1D6}
\mathcal{W}_{[m,0,n,0,0,p]} (\id) =
\frac{\xi (2 s-5)^3
}{\xi (2 s) \xi (2 s-4)\xi (2 s-2)} B_{m} (3-s)B_n(3-s)B_p(3-s)\,.
\end{align}
This is Eulerian as one expects for a character in a maximal orbit of the wave-front set.

By comparison, the following Whittaker coefficient of type $2A_1$ is, in the generic degenerate principal series $I_{\mathbf{P}_6}$, given by
\begin{align}
\mathcal{W}_{[m,0,n,0,0,0]} (\id) &=
\frac{\xi (2 s-5)^2
}{\xi (2 s) \xi (2 s-2)} B_{m} (3-s) B_{n} (3-s)\nonumber\\
&\quad+\frac{\xi (2 s-5)^3
}{\xi (2 s) \xi (2 s-4) \xi (2 s-2)} B_{m} (3-s) B_{n} (3-s)
\end{align}
and is clearly not Eulerian.
For the special value $s=2$, however, the generic spinor reduces to a next-to-minimal representation with Eulerian Whittaker coefficient since the second term vanishes. Moreover, the $3A_1$ Whittaker coefficient~\eqref{eq:3A1D6} vanishes for $s=2$ and therefore the Whittaker support of the automorphic representation is reduced to $(2A_1)''$ at this value of $s$.

For generic $D_n$ with $n>6$, the Whittaker support of the degenerate principal series induced from $\mathbf{P}_n$ is  $\lfloor \tfrac{n}{2}\rfloor A_1$.
Computing all the degenerate Whittaker coefficients in this case is not very feasible and instead we analyse the Fourier coefficients of the $\mathbf{P}_n$ degenerate principal series with respect to the unipotent radical of the parabolic subgroup $\mathbf{P}_1$. The complete Fourier expansion was already obtained in~\cite[(B.8)]{Bossard:2015oxa}  and  can be written as
\begin{align}
\label{genDspinor}
 \xi(2s)  E^{D_{n}}(\lambda_n(s))&= 2 \xi(2s)
 E^{D_{n-1}}(\lambda_{n-1}(s))  + 2 \xi(2s-n)
 E^{D_{n-1}}(\lambda_{n-2}(s-1))\nonumber \\
&\hspace{5mm} +
\hspace{-3mm}
\sum_{\substack{Q \in \mathbb{Z}^{2(n-1)}_\times\\ \langle Q,Q\rangle = 0 }}
f_Q(s,n)
E^{D_{n-2}}(\lambda_{n-3}(s-1))
e^{2\pi i \langle Q, u\rangle }\ ,
\end{align}
where $Q\in \mathbb{Z}^{2(n-1)}_\times$ labels a non-vanishing character on the unipotent $\mathbf{U}_1$ of $D_n$ and $u\in \mathfrak{u}_1$ is a Lie algebra element so that $e^{2\pi i \langle Q, u \rangle }$ is the corresponding Fourier mode. The constraint on the sum means that the vector $Q$ is null in the split signature lattice and this characterizes  minimal elements in the character variety $\mathfrak{u}_1^*$.

The first line represents the constant term in this unipotent Fourier expansion and is given by maximal parabolic Eisenstein series on the semi-simple Levi $D_{n-1}$. The non-trivial Fourier coefficients in the second line are only non-zero for elements in the minimal $D_{n-1}$-orbit in $\mathfrak{u}_1^*$ that is given by non-vanishing null vectors $Q$. These minimal elements are the intersection of the minimal $A_1$-type of orbit of $D_n$ with $\mathfrak{u}_1^*$. The corresponding Fourier coefficient is composed out of a non-vanishing Eulerian function $f_Q(s,n)$, that is explicitly known, and an Eisenstein series on $D_{n-2}$. Here, $D_{n-2}$ arizes as the stabilizer (in $D_{n-1}$) of the null vector $Q$ and is obtained by deleting the first two nodes of the original $D_n$ diagram. The function $f_Q(s,n)$ is given by a product of a certain divisor sum (contribution from the finite places) and a modified Bessel function (archimedean contribution), and has a trivial Fourier expansion on the Levi $D_{n-1}$.
Using~\eqref{genDspinor} one can now identify the minimal and next-to-minimal representations in the degenerate principal series $I_{\mathbf{P}_n}$ of $D_n$.

The value $s=1$ always realizes a minimal representation. This can be seen in the above formula as follows: For $s=1$ the Eisenstein series on the Levi subgroups $D_{n-1}$ and $D_{n-2}$ in the second and third terms belong to the trivial representation and are equal to one. The sum over $Q$ in the third term contains only minimal characters in the Fourier mode  $e^{2\pi i \langle Q, u \rangle }$ and so is of type $A_1$ as is thus the full second line since $E^{D_{n-2}}$ is trivial. The first term is by induction minimal at $s=1$ and the second term belongs to the trivial orbit. Therefore, $s=1$ is by induction an automorphic realization of a minimal representation, in agreement with Table~\ref{tab:SmallReps}. Moreover, the Fourier coefficient is Eulerian as can be checked by induction or using the fact that this automorphic realization of a minimal representation is related functionally to the one at the beginning of the $D_n$ section which was checked to have an Eulerian Whittaker coefficient.

A next-to-minimal representation is always realized at $s=2$.
 This can be seen in~\eqref{genDspinor} as follows. The first term in the first line belongs to a next-to-minimal representation by induction. The second term belongs to the minimal orbit as its proper $s$-value is then $s-1=1$ which was analysed above to be minimal. The last term has an Eisenstein series on the stabilizer subgroup $D_{n-2}$ at $s-1=1$ which again is minimal multiplied with a minimal character on the character variety, so together they are always of type $2A_1$ and hence next-to-minimal. The two $A_1$s are orthogonal since one is associated with node $1$ of the original $D_n$ diagram and the other with any of the nodes of $D_{n-2}$ that is disconnected from node $1$.  The $2A_1$ Fourier modes in the Whittaker support of the next-to-minimal representation at $s=2$ are moreover Eulerian as the function $f_Q(s,n)$ is can be seen by inspection of its explicit form in~\cite{Bossard:2015oxa}.

\subsubsection{Type \texorpdfstring{$E_6$}{E6}}

The automorphic realization of the minimal representation given in Table~\ref{tab:SmallReps} by $s_1=3/2$ and the associated $A_1$-type Whittaker coefficient were given in~\cite{Dvorsky1999,Kazhdan:2001nx,KazhdanPolishchuk,Savin2007,FKP} and shown to be Eulerian.

There is a one-parameter family of next-to-minimal representations for $E_6$, see Table~\ref{tab:SmallReps}.
A next-to-minimal Eisenstein series can be obtained for any value of $s$ in the degenerate principal series induced from the parabolic $\mathbf{P}_1$ or $\mathbf{P}_6$. Focusing on $\mathbf{P}_1$ for concreteness, this family can be parametrised by taking a generic character $\lambda=2s\Lambda_1-\rho$ with $s\in\mathbb{C}$. We shall now analyse the degenerate Whittaker vectors of type $A_1$ and type $2A_1$.

Consider first taking a minimal character $\psi$ such that $\mathbf{G}'$ is of type $A_1$ and take this for simplicity to be the node that also defines $\widetilde{\mathbf{G}}$, i.e., we take $\mathbf{G}'$ to correspond to node $1$. Then there are $21$ double coset elements in the sum of \eqref{eq:degW}. For all but $6$ of these the Whittaker coefficient $\mathcal{W}^{\mathbf{G}'}_{\psi}$ vanishes since $w_c^{-1}\lambda$ is not generic on that $A_1$. Thus we have a simplification but still a non-Eulerian degenerate Whittaker coefficient. There are non-generic choices for $s$ (see Table~\ref{tab:SmallReps}) where there are simplifications coming from the intertwiner and these correspond to embedding the minimal representation in this degenerate principal series and at these special points one is left with a single term in the sum, leading to an Eulerian Whittaker coefficient.

Now consider a next-to-minimal character $\psi$ with $\mathbf{G}'$ of type $2A_1$ that we take to be defined on the orthogonal nodes $1$ and $4$.
Evaluating the resulting Whittaker coefficient $\mathcal W^{\mathbf{G}}_\psi$ in the degenerate principal series all but one term in the sum disappear with the result
\begin{align}
\mathcal{W}_{[m,0,0,n,0,0]} (\id) = \frac{\xi (2 s-7)^2 }{\xi (2 s) \xi (2 s-3)} B_{m}(4-s) B_{n}(4-s)\,.
\end{align}
This represents an Eulerian expression for the type $2A_1$ Whittaker coefficient in the next-to-minimal representation. By Theorem~\ref{thm:transfer} this Eulerianity transfers to other unipotent Fourier coefficients.

\subsubsection{Type \texorpdfstring{$E_7$}{E7}}

The automorphic realization of the minimal representation given in Table~\ref{tab:SmallReps} by $s_1=3/2$ and the associated $A_1$-type Whittaker coefficient were given in~\cite{Dvorsky1999,Kazhdan:2001nx,KazhdanPolishchuk,Savin2007,FKP} and shown to be Eulerian.

For $E_7$, we consider the so-called abelian realisation which means the degenerate principal series associated with the maximal parabolic $\mathbf{P}_7$ where node $7$ is singled out. For generic members of this degenerate principal series, the Whittaker support is $3A_1$~\cite{MillerSahi}, but for special points this reduces to $2A_1$ and gives an automorphic realization of a next-to-minimal representation, see Table~\ref{tab:SmallReps}.

We focus immediately on the $2A_1$ Whittaker coefficient and choose a character $\psi$ that is supported on nodes $1$ and $7$.
For generic $s$ one has the non-Eulerian expression
\begin{align}
\mathcal{W}_{[m,0,0,0,0,0,p]} (\id) &=
\frac{\xi (2 s-9) \xi (2 s-6)
}{\xi (2 s) \xi (2 s-4)}B_{m}(\tfrac{13}{2}-s)B_{n}(5-s)\nonumber\\
&\quad+\frac{\xi (2(s-6)) \xi (2 s-9)^2
}{\xi (2 s) \xi (2s-8) \xi (2 s-4)}B_{m}(\tfrac{7}{2}-s)B_{n}(5-s)\,.
\end{align}
For $s=4$ the second term vanishes and one is left with a single Eulerian coefficient as expected for an orbit in the Whittaker support of a next-to-minimal representation. By Theorem~\ref{thm:transfer} this Eulerianity can be transferred to other Fourier--Jacobi coefficients. One can also check that Whittaker coefficients for larger orbits vanish.

\subsubsection{Type \texorpdfstring{$E_8$}{E8}}

The automorphic realization of the minimal representation given in Table~\ref{tab:SmallReps} by $s_1=3/2$ and the associated $A_1$-type Whittaker coefficient were given in~\cite{Dvorsky1999,Kazhdan:2001nx,KazhdanPolishchuk,Savin2007,FKP} and shown to be Eulerian.

For $E_8$, we consider the degenerate principal series in the Heisenberg realisation which is associated with the maximal parabolic subgroup $\mathbf{P}_8$.
The Whittaker support for generic members of the principal degenerate series is type $A_2$ (lying over the non-special $3A_1$ in the $E_8$ Hasse diagram~\cite{Spaltenstein}) and this has to reduce to $2A_1$ for the correct $s$-value given as $s=9/2$ in Table~\ref{tab:SmallReps}.

We first determine a Whittaker coefficient of type $A_2$ for generic members of the degenerate principal series
\begin{align}
\mathcal{W}_{[0,0,0,0,0,0,m,n]} (\id) &=
\frac{\xi (2 (s-9)) \xi (2 (s-7)) \xi (2 s-11) \xi (4 s-29) }{\xi (4 (s-7)) \xi (2 s) \xi (2 s-9) \xi (2 s-5)}
B_{m,n}(6-s,\tfrac{19}{2}-s)
\end{align}
where $B_{m_1,m_2}(s_1,s_2)$ was defined in~\eqref{eq:SL3W}. This Whittaker coefficient is Eulerian but it vanishes when $s=9/2$ which is the value when the degenerate principal series has a subrepresentation corresponding to a next-to-minimal representation.

In this case it is also interesting to consider the $3A_1$ coefficient. For generic $s$ one instance is
\begin{align}
\mathcal{W}_{[0,0,0,m,0,n,0,p]} (\id) &=
\frac{\xi (2 s-11)^3
}{\xi (2 s) \xi (2 s-9) \xi (2 s-5)}
B_{m}(6-s) B_{n}(6-s) B_{p}(6-s)\nonumber\\
&\quad +\frac{\xi (2 (s-9))^3 \xi (4 s-29)}{\xi (4 (s-7)) \xi (2 s) \xi (2 s-9) \xi (2 s-5)} B_{m}(\tfrac{19}{2}-s)B_{n}(\tfrac{19}{2}-s)B_{p}(\tfrac{19}{2}-s)
\end{align}
and it is clearly not Eulerian. However, we see that it vanishes completely for $s=9/2$ which is to be expected since the orbit $3A_1$ is not special and therefore should not be the Whittaker support of any automorphic representation. This is the phenomenon of `raising nilpotent orbits'~\cite{JLS}.

Let us finally consider a $2A_1$ Whittaker coefficient and the next-to-minimal representation. We choose the character $\psi$ for on nodes $6$ and $8$ and formula~\eqref{eq:degW} yields
\begin{equation}
\begin{split}
  \MoveEqLeft
\mathcal{W}_{[0,0,0,0,0,m,0,n]} (\id) = \tfrac{ \xi (2 s-11)^2}{\xi (2 s) \xi   (2s-5)}
       B_{m}(6-s)B_n(6-s)
      \\
& +\tfrac{ \xi (2 (s-9)) \xi (2 (s-7)) \xi (2 (s-6))\xi (4 s-29)}{\xi (4 (s-7)) \xi (2 s) \xi (2 s-9) \xi (2s-5)}
    B_{m}(\tfrac{13}{2}-s) B_{n}(\tfrac{19}{2}-s)
   \\
&+\tfrac{ \xi (2 s-11)^2 \bigl(  \xi (2 s-11)
+ \xi (2 s-13)
+ \xi (2 (s-6))
+\xi (2 (s-5)) \bigr) }{\xi (2 s) \xi(2 s-9) \xi (2 s-5)}
        B_{m}(6-s)B_n(6-s)
      \\
&+\tfrac{ \xi (2 (s-9))^2 \xi (4 s-29) \bigl( \xi (2 s-19) + \xi (2 s-17) + \xi (2 s-15) + \xi (2 (s-9)) + \xi (2 (s-8)) \bigr)}{\xi (4 (s-7)) \xi (2s) \xi (2 s-9) \xi (2s-5)}
   B_{m}(\tfrac{19}{2}-s)B_n(\tfrac{19}{2}-s)
      \\
&+\tfrac{ \xi (2 (s-7)) \xi (2 s-17) \xi (2s-11) }{\xi (2 s) \xi (2 s-9) \xi (2 s-5)}
          B_{m}(9-s)B_n(6-s)\,.
\end{split}
\end{equation}
For the special value $s=9/2$ all terms but the first one vanish and one is left with an Eulerian expression. This shows that the $2A_1$ Whittaker coefficients become Eulerian in the next-to-minimal representation of Table~\ref{tab:SmallReps}.

Theorem~\ref{thm:transfer} then allows to conclude Eulerianity for all Fourier--Jacobi coefficients with characters in the above orbits, thus proving Theorem~\ref{thm:ntm-Eisenstein}.

{\small
\newcommand{\etalchar}[1]{$^{#1}$}
\def\cprime{$'$}
\providecommand{\href}[2]{#2}\begingroup\raggedright\endgroup
}

\end{document}